\newtheorem{theorem}{Theorem}[section]
\newtheorem{proposition}[theorem]{Proposition}
\newtheorem{claim}[theorem]{Claim}
\newtheorem{lemma}[theorem]{Lemma}
\newtheorem{corollary}[theorem]{Corollary}
\newtheorem{example}[theorem]{Example}
\newtheorem{remarque}[theorem]{Remark}
\newcommand{\p}{\mathbb{P}}
\title{On the invariant spectrum on $\p^1$}
\date{\today, \currenttime}
\author{Mounir Hajli\footnote{\small National Center for Theoretical Sciences (Taipei Office),
 National Taiwan University,
Taipei 106, Taiwan \newline \emph{E-mail}: \ttfamily{hajlimounir@gmail.com}}}
\begin{document}

\maketitle

\begin{abstract}
Motivated by the work of Abreu and Freitas \cite{Abreu2}, we study the invariant spectrum of the
Laplace operator associated to hermitian line bundles endowed with invariant
metrics over $\p^1$.
\end{abstract}

\section{Introduction}

Let $\mathbf{P}^1$ be the complex projective line and $\omega$ a smooth and normalized
 K\"ahler form on $\mathbf{P}^1$. We denote by $\lambda_1(\omega)$
the first eigenvalue of the Laplace operator defined by $\omega$ and acting on  smooth functions on $\mathbf{P}^1$. In
\cite{Hersch}, Hersch showed that
\[
\lambda_1(\omega)\leq 2.
\]
In \cite{Abreu2}, Abreu and Freitas studied the invariant spectrum of invariant metrics on $\mathbf{P}^1$. Their goal
was to analyze this type of inequality in  the invariant setting. We denote by $0=\lambda_0(\omega)<\lambda_1(\omega)<\ldots$
the invariant eigenvalues of the Laplace operator defined by $\omega$. Their first result  shows
that there is no general analogue of Hersch's theorem (\cite[theorem 1]{Abreu2}). Nevertheless, when
they consider the class of invariant metrics that are isometric to a surface of revolution in $\mathbf{R}^3$, they
gave an optimal upper bounds for the invariant eigenvalues associated to this class. Their second result is the
following theorem
\begin{theorem}(\cite[Theorem 2]{Abreu2})\label{y3} Within the class of smooth,  invariant and normalized
 K\"ahler form $\omega$ on $\mathbf{P}^1$ and corresponding to a surface of revolution in $\mathbf{R}^3$, we have
 \[
 \lambda_j(\omega)<\frac{\xi^2_j}{2}\quad   j=1,2,\ldots
  \]
  where $\xi_j$ is the $\frac{1}{2}(j+1)$th positive zero of the Bessel function $J_0$ if $j$ is odd, and
the  $\frac{1}{2}j$\,th positive zero of $J_0'$ if $j$ is even. These bounds are optimal.
\end{theorem}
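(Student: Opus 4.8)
The plan is to reduce the invariant eigenvalue problem to a weighted one--dimensional Sturm--Liouville problem and then to compare an arbitrary admissible surface with a single degenerate extremal configuration, the doubled flat disk.

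\emph{Step 1 (Reduction).} Writing the surface of revolution in meridian arclength $s\in[0,L]$ with profile radius $r(s)\ge 0$, $r(0)=r(L)=0$ and $r>0$ on $(0,L)$, the invariant metric is $ds^2+r(s)^2\,d\theta^2$ with area element $r\,ds\,d\theta$, and the Laplacian acting on $\theta$--invariant functions $f(s)$ is $-\tfrac1r(rf')'$. The invariant eigenvalues are therefore those of $-(rf')'=\la r f$ with the regularity (Neumann--type) conditions at the poles, and they admit the min--max characterization $\la_j(\m)=\min_{\dim E=j+1}\max_{0\ne f\in E}\big(\int_0^L r f'^2\,ds\big)\big/\big(\int_0^L r f^2\,ds\big)$ over invariant trial spaces $E$. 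Two constraints are decisive: the normalization fixes the total area $\int_0^L r\,ds$, and embeddability in $\R^3$ forces $|r'|\le 1$ (because $r'^2+z'^2=1$ for the meridian $(r,z)$), whence $r(s)\le\min(s,L-s)$.

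\emph{Step 2 (Extremal object).} Fix $R$ by the area normalization and consider the degenerate profile $r_*(s)=\min(s,2R-s)$ on $[0,2R]$, the doubled flat disk of radius $R$. Its invariant eigenfunctions are $J_0(\sqrt\la\,s)$ on each half, matched across the equator $s=R$ by parity: even modes give $J_0'(\sqrt\la R)=0$ and odd modes give $J_0(\sqrt\la R)=0$. Ordering these eigenvalues, with $\la_0=0$ first, reproduces exactly the assignment in the statement, namely the $\tfrac12 j$th positive zero of $J_0'$ for even $j$ and the $\tfrac12(j+1)$th positive zero of $J_0$ for odd $j$, so that $\la_j^*=\xi_j^2/R^2$. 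The normalization is the one for which $R^2=2$ (total area $4\pi$, consistent with Hersch's $\la_1\le 2$), giving the target value $\xi_j^2/2$.

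\emph{Step 3 (Comparison, the heart of the proof).} Given an admissible smooth profile $r$ normalized so that $\int_0^L r=R^2$, I would introduce the area--matching reparametrization $\sigma\colon[0,L]\to[0,2R]$ defined by $\int_0^{\sigma} r_*=\int_0^{s} r$, and transplant each disk function $\phi$ to $f=\phi\circ\sigma$. The point is that, after the change of variables $\sigma\mapsto s$, the denominator is preserved exactly, $\int_0^L r f^2\,ds=\int_0^{2R} r_*\phi^2\,d\sigma$, while the numerator becomes $\int_0^{2R}(r^2/r_*^2)\,r_*\,\phi'^2\,d\sigma$; thus the entire comparison reduces to the pointwise inequality $r(s)\le r_*(\sigma(s))$, i.e. that at equal accumulated area the surface is no wider than the disk. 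This in turn follows from the slope bound: for every $s$, $|r'|\le 1$ gives $r(s')\ge r(s)-(s-s')$, whence $\int_0^s r\ge\tfrac12 r(s)^2$, that is $r(s)\le\sqrt{2\int_0^s r}$; applying this from both poles yields $r(s)\le\min(\sqrt{2a},\sqrt{2(R^2-a)})=r_*(\sigma(s))$ with $a=\int_0^s r$. Consequently $R[\phi\circ\sigma]\le R[\phi]$ for every $\phi$, and taking $E$ to be the transplant of the span of the first $j+1$ disk eigenfunctions gives $\la_j(\m)\le\la_j^*=\xi_j^2/2$ by min--max.

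\emph{Step 4 (Strictness and optimality).} Equality in Step 3 would force $r=r_*$ wherever the maximizing disk eigenfunction has nonzero derivative; since the invariant eigenvalues of the doubled disk are simple and $\phi_j^{*\prime}$ (a Bessel expression) vanishes only at isolated points, this would force $r=r_*$ almost everywhere, which is impossible for a smooth surface, because the tent $r_*$ has a corner at the equator whereas smoothness imposes $r'=0$ at the maximal circle. Hence the inequality is strict, $\la_j(\m)<\xi_j^2/2$ for all $j\in\N_{\ge 1}$. For optimality I would round off the corner of $r_*$ on a band of width $\eps$ about the equator, keeping $|r'|\le 1$ and the area fixed, producing smooth surfaces $\m_\eps$ whose profiles converge to $r_*$, and invoke continuity of the Sturm--Liouville spectrum under this degeneration to get $\la_j(\m_\eps)\to\xi_j^2/2$. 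The main obstacles are the rigorous equality analysis giving strictness and the spectral--continuity statement needed for optimality; the comparison inequality of Step 3, once the area--matching correspondence is isolated, is the decisive and surprisingly clean step.
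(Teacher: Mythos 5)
Your proposal is correct and is essentially the proof of Abreu--Freitas that the paper follows: your area-matching reparametrization $\sigma$ is exactly the passage to symplectic (action) coordinates via the accumulated area, your pointwise bound $r(s)\le r_*(\sigma(s))$ derived from $|r'|\le 1$ is precisely the inequality $\overline{g}(x)<\overline{g}_{\max}(x)$ of \eqref{y8}, and your doubled flat disk is the limit problem whose spectrum is computed in terms of zeros of $J_0$ and $J_0'$ (the $m=0$ case of Theorem \ref{y2}), with the comparison then concluded by the monotonicity/min--max principle as in Theorem \ref{y4}. The only points needing the care you already flag are the strictness analysis and the spectral continuity used for optimality, both of which are handled in the cited references.
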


In \cite{Colbois}, Colbois, Dryden and El Soufi considered a more 
general situation. Namely, the subsequence 
$\lambda_j^G$ of the spectrum of a Riemannian manifold  $M$ which corresponds
to metrics and functions invariant under the action of a compact Lie 
group $G$. If dimension of $G$ is at least $1$, they showed that
the functional $\lambda_j^G$ admits no extremal metric under 
volume-preserving $G$-invariant deformations, cf. 
\cite[Theorem 1.1]{Colbois}. If, moreover,  $M$ has dimension at least
$3$, they proved that the functional $\lambda_j^G$ is unbounded when
restricted to any conformal class of $G$-invariant metrics of fixed
volume, cf. \cite[Theorem 1.2]{Colbois}. When $M=\mathbf{S}^n$ is 
equipped with the standard $O(n)$-action and we require that
the metric to be induced by an embedding of $\mathbf{S}^n$ in
$\mathbf{R}^{n+1}$, they gave an optimal upper bound on
$\lambda_j^G$, cf. \cite[Theorem 1.7]{Colbois}. In particular, this
result generalizes Theorem \ref{y3}.\\

Let $\omega$ be a smooth  and normalized volume form on $\mathbf{P}^1$ and $h$ a smooth  hermitian metric
on  the holomorphic line bundle $\mathcal{O}(m)$ over $\mathbf{P}^1$ 
($m\in \mathbf{N}$). We assume that $\omega$ and $h$ are invariant
under the standard action of $\mathbf{S}^1$.
The  goal of this paper is the study of the invariant spectrum of the Laplace operator $\Delta_{\omega,h}$
defined by $\omega$ and $h$, and acting on the space of smooth functions with coefficients in $\mathcal{O}(m)$.
We denote by $\lambda_0(\omega,h)=0<\lambda_1(\omega,h)<\lambda_2(\omega,h)<\ldots$ the invariant eigenvalues of $\Delta_{\omega,h}$.
Using symplectic coordinates, we attach to $\omega$ (resp. $h$) a continuous function
$g_\omega$ (resp. $h_\omega$) defined on $[0,1]$. Then our main result
is the following theorem

 \begin{theorem}\label{z1}(see Theorem \ref{y4}) Let $m\in \mathbf{N}$. Let $\omega$ and $h$ be 
 as before.
We suppose that $1<\frac{\overline{h}_\omega(x)}{\min(1,2(1-x))^m}
$ 

$<\frac{2\min(x,1-x)}{\overline{g}_\omega(x)},$ for any 
$x\in [0,1]$. Then

 \[
 \lambda_j(\omega,h)< \frac{\xi_{m,j}^2}{2}, \quad  j=1,2,\ldots,
 \]
where $\xi_{m,j}$ is a zero of the function $\frac{d}{dz}(z^{-m}J_0J_{m})$ such that $0<
\frac{\xi_{m,1}^2}{2}
<\frac{\xi_{m,2}^2}{2}<\ldots $ and
 $J_m$ is the Bessel function of order $m$.
Moreover, these bounds are optimal.
 \end{theorem}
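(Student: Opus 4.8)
The plan is to diagonalize $\Delta_{\omega,h}$ on invariant sections by passing to the symplectic coordinate $x=\frac{\pt F_\m}{\pt u}(u)\in\,]0,1[$ constructed above, thereby reducing the problem to a one-dimensional Sturm-Liouville problem, and then to bound its spectrum from above by the explicitly solvable limiting problem attached to the extremal pair $\big(\g_{can},\,\min(1,2(1-x))^m\big)$. First I would trivialize $\mathcal{O}(m)$ by the holomorphic section $x_0^m$ and write an invariant smooth section as $s=f(x)\,x_0^m$. Since $\bar\partial(f\,x_0^m)=(\bar\partial f)\otimes x_0^m$ and $\|x_0^m\|_h^2$ is exactly $\overline{h}(x)$, the Dirichlet energy of $s$ for the $\bar\partial$-Laplacian becomes, after using \eqref{x10} and the identity $\g_\omega(x)=-\frac{\pt^2F_\m}{\pt u^2}(G_\m(x))$, a weighted one-dimensional energy. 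Concretely, I expect the invariant eigenvalues to be given by the min-max formula
\[
\la_j(\omega,h)=\min_{\dim V=j+1}\ \max_{0\neq f\in V}\ \frac{\tfrac12\int_0^1 \g_\omega(x)\,\overline{h}(x)\,|f'(x)|^2\,dx}{\int_0^1 \overline{h}(x)\,|f(x)|^2\,dx},
\]
the factor $\tfrac12$ reflecting the normalization of $\Delta_{\omega,h}$ as a $\bar\partial$-Laplacian, with the self-adjoint realization fixed by the regularity of $s$ at the two fixed points $[1:0]$ and $[0:1]$ of the torus action; at $x=1$ this encodes the order-$m$ vanishing carried by $x_0^m$, which is the ultimate source of the index $m$ in the answer.

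The second step is a monotonicity principle for this Rayleigh quotient. The two strict inequalities in the hypothesis,
\[
\overline{h}(x)>\min(1,2(1-x))^m\quad\text{and}\quad \g_\omega(x)\,\overline{h}(x)<\g_{can}(x)\,\min(1,2(1-x))^m,
\]
say exactly that, relative to the extremal pair, the numerator weight $\g_\omega\,\overline{h}$ is pointwise smaller while the denominator weight $\overline{h}$ is pointwise larger. Hence for every admissible $f$ the Rayleigh quotient for $(\omega,h)$ is strictly dominated by that of the limiting problem with coefficients $\g_{can}$ and $\min(1,2(1-x))^m$. Choosing $V$ to be the span of the first $j+1$ eigenfunctions of the limiting problem as a trial space and applying the min-max termwise then yields $\la_j(\omega,h)<\la_j^{\mathrm{ext}}$ for every $j\geq1$; here one must check that these extremal eigenfunctions lie in the form domain attached to $(\omega,h)$, which follows from the same two inequalities together with the prescribed endpoint behaviour of $\overline{h}$.

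The third and most delicate step is to solve the limiting problem and to identify $\la_j^{\mathrm{ext}}=\tfrac12\xi_{m,j}^2$. Writing $\overline{h}_{can}(x):=\min(1,2(1-x))^m$, which equals $1$ on $[0,\tfrac12]$ and $(2(1-x))^m$ on $[\tfrac12,1]$, and recalling $\g_{can}(x)=2\min(x,1-x)$, the eigenvalue equation $-\tfrac12\big(\g_{can}\,\overline{h}_{can}\,f'\big)'=\la\,\overline{h}_{can}\,f$ restricts on each half to a Bessel equation: on $]0,\tfrac12[$ the solution regular at $x=0$ is $J_0\big(2\sqrt{\la x}\big)$, while on $]\tfrac12,1[$ the solution selected by the order-$m$ condition at $x=1$ is $(1-x)^{-m/2}J_m\big(2\sqrt{\la(1-x)}\big)$. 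At the interface $x=\tfrac12$ both Bessel arguments equal $\sqrt{2\la}=:\xi$, and imposing continuity of $f$ and of the flux $\g_{can}\,\overline{h}_{can}\,f'$ there produces a Wronskian-type transcendental relation in $\xi$ which I expect to collapse exactly to $\frac{d}{dz}\big(z^{-m}J_0(z)J_m(z)\big)\big|_{z=\xi}=0$, so that $\la_j^{\mathrm{ext}}=\tfrac12\xi_{m,j}^2$. Getting this gluing to reduce precisely to the stated derivative is the main obstacle; a reassuring consistency check is $m=0$, where $\frac{d}{dz}\big(J_0(z)^2\big)=2J_0(z)J_0'(z)$, so the $\xi_{0,j}$ interlace the zeros of $J_0$ and of $J_0'$ and one recovers exactly the odd/even description of Theorem \ref{y3}.

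Finally, to prove optimality I would construct, for each fixed $j$, a family of continuous, normalized, invariant data $(\omega_n,h_n)$ satisfying the hypothesis whose profiles $\g_{\omega_n}$ and fibre functions $\overline{h}_n$ decrease monotonically to the extremal pair $\big(\g_{can},\,\min(1,2(1-x))^m\big)$ as $n\to\infty$. By continuous dependence of the Sturm-Liouville spectrum on its coefficients, $\la_j(\omega_n,h_n)\to\tfrac12\xi_{m,j}^2$, so the bound cannot be lowered, which is the asserted optimality.
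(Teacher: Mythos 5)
Your skeleton is exactly the paper's: reduce the invariant problem to a one-dimensional Rayleigh quotient in the symplectic coordinate $x$, use the two pointwise inequalities to dominate that quotient by the one attached to the extremal pair $(\g_{can},\overline{h}_{m,\infty})$ (this is Theorem \ref{y4}), and identify the extremal spectrum with $\{\tfrac12\xi_{m,j}^2\}$ (this is Theorem \ref{y2}). Your monotonicity step is correct, and in fact your handling of strictness --- strict pointwise inequalities give a strict inequality of Rayleigh quotients, and the max over the finite-dimensional trial space spanned by the extremal eigenfunctions is attained --- is more careful than the paper's own Theorem \ref{y4}, which only records $\la_j(\m,h)\leq\la_j(\m_{can},h_{m,\infty})$ while Theorem \ref{z1} asserts $<$. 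You also rightly flag that the extremal eigenfunctions must be shown to lie in the common form domain $H_2^0$.

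The genuine gap is your third step. The paper does not prove the spectral identification for the limiting problem here at all: it imports it wholesale from \cite{Mounir1}, where the singular operator $\Delta_{\m_{can},h_{m,\infty}}$ is first \emph{constructed} (the coefficients $\g_{can}$ and $\overline{h}_{m,\infty}$ are only continuous, and both weights degenerate at the endpoints), shown to admit a positive self-adjoint extension with discrete spectrum, and only then computed. You assume without argument that the degenerate Sturm--Liouville problem on $]0,1[$ is well posed, that its spectrum is discrete, and that min-max applies to it; and the decisive gluing computation at $x=\tfrac12$ --- that continuity of $f$ and of the flux $\g_{can}\overline{h}_{can}f'$ collapses to $\frac{d}{dz}\bigl(z^{-m}J_0J_m\bigr)=0$ --- is left as ``I expect,'' which you yourself call the main obstacle. (Your Bessel ansatz on each half-interval is the right one, and your $m=0$ consistency check against Theorem \ref{y3} is sound, but the selection of the index $m$ in $J_m$ at the endpoint $x=1$, i.e.\ which boundary condition the order-$m$ vanishing of the section actually imposes on the singular problem, is precisely the content that requires the operator-theoretic setup of \cite{Mounir1}.) The optimality claim is likewise only sketched, by both you and the paper; your approximation-plus-continuous-dependence argument is the intended one but would need the continuity of $\la_j$ in the coefficients to be justified up to the degenerate endpoints.
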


To prove the upper bound for $\lambda_j(\omega)$  in 
\cite[Theorem 2]{Abreu2}, the authors used the Rayleigh quotient
and showed that the supremum is attained by  the union of two 
disks of equal area, a singular surface. This surface corresponds to a 
singular metric $g_{\max}$ (see \cite[p. 225]{Abreu2}). Notice  also that a similar approach 
is used 
in the proof of Theorem 1.7 in  \cite{Colbois}.\\

Let 
   $\overline{g}_{can}$ be the function on $[0,1]$ given by   $\overline{g}_{can}(x):=2\min(x,1-x)
   ,$ for any $
   x\in [0,1]$. This function corresponds to  
   $\overline{g}_{\max}$ considered in \cite[p. 225]{Abreu2} which played 
   an important role, 
   and 
    the  metrics  in Theorem 
 \ref{y3} correspond to a set of  smooth  functions $\overline{g}_\omega$ on
 $[0,1]$ such that $0\leq \overline{g}_\omega<\overline{g}_{can}$, see Remark 
 \ref{z4}.

 Comparing to \cite{Abreu2} and \cite{Colbois}, our approach for the
proof of Theorem \ref{z1} is different, for instance 
the upper bounds correspond to the invariant eigenvalues 
of a singular Laplacian associated to some 
 metrics defined uniquely in terms of
the geometry of $\mathbf{P}^1$. First, we 
show that $\overline{g}_{can}$ (equivalently $\overline{g}_{\max}$) defines 
 a singular volume form $\omega_{can}$ on  $\mathbf{P}^1$, and
 that $\overline{h}_{m,\infty}(x):=\max(1,2(1-x))^m$ corresponds to
 a singular metric on $\mathcal{O}(m)$ denoted  by 
 $\|\cdot\|_{m,\infty}$. This is done by extending the formalism
  of symplectic
  coordinates to a large class of singular metrics.  The  proof of Theorem \ref{z1}  is  mainly 
  based on the theory of the singular Laplacians
  associated to canonical metrics on $\mathbf{P}^1$ developped
   in \cite{Mounir1}.  We show that the proof  follows easily from the
  explicit computation of the spectrum of the singular Laplacian defined by 
  $\omega_{can}$ and $h_{m,\infty}$. 
  In particular, the upper bounds in
  Theorem \ref{y3} (that is the case $m=0$ in Theorem \ref{z1}) are in fact
   the invariant eigenvalues $\lambda_j(\omega_{can})$ 
  of the singular Laplacian associated to   $
   \mathbf{P}^1$ endowed with $\omega_{can}$. 
As a consequence, we recover  the previous result of Abreu and Feitas. When
$m\geq 1$, the upper bound $\frac{\xi^2_{m,j}}{2}$ is the $j$ th invariant
eigenvalue of the singular Laplacian associated to $\mathbf{P}^1$ endowed
with $\omega_{can}$ and $\mathcal{O}(m)$ equipped with the metric
$\|\cdot\|_{m,\infty}$. \\

In Paragraph \eqref{www1}, we construct 
$(\omega_\mu)_{\mu\geq 0}$ (resp. $(h_{\omega_\mu})_{\mu\geq 0}$) 
a sequence of smooth normalized volume forms (resp. smooth metrics) on $\mathbf{P}^1$
(resp. on $\mathcal{O}(m)$) such that 
\[
\lim_{\mu\rightarrow \infty}\sup_{x\in [0,1]}\frac{\overline{g}_{\omega_\mu}\overline{h}_{\omega_\mu}}{
\overline{g}_{can}\overline{h}_{\infty,m}}=\infty,\quad\text{and}\quad
\lim_{\mu \rightarrow \infty}\lambda_1(\omega_\mu,h_{\omega_{\mu}})=\infty.
\]

 As mentioned  above, the functions $\overline{g}_{can}$ 
 and $\overline{h}_{m,\infty}$ are associated to some 
 singular metrics on line bundles on $\mathbf{P}^1$. These 
 metrics are called \textit{the canonical 
  metrics} on $\mathbf{P}^1$, and they play a crucial role
  in this article. More generally, if $X$ is  a
  projective toric manifold of dimension $n$, $T(\simeq (\mathbf{C}^\ast)^{n})$ is its associated torus and $L$ a 
  $T$-equivariant line bundle
  on $X$, then we can attach canonically a continuous hermitian 
   metric
  $\| \cdot\|_{L,\infty}$ to $L$. The metric $\|\cdot\|_{L,
  \infty
  }$   is  defined uniquely in terms of  the
  combinatorial structure of $X$ and is called \textit{the canonical 
  metric} of $L$.  Let $p\in \mathbf{N}_{\geq 2}$, we denote by  $[p]:X\rightarrow X$  the 
  morphism extending the following morphism $T\rightarrow T,
  (z_1,\ldots,z_n)\mapsto (z_1^p,\ldots,z_n
  ^p)$. Since $L$ is a $T$-equivariant line bundle, we have a natural isomorphism $\theta: L^{\otimes p}\simeq 
  [p]^\ast L $. We can show that $\|\cdot\|_{L,\infty}$ is the 
  unique metric on $L$ such that $\theta$ induces an isometry of
  continuous hermitian line bundles.  Assume
   that $L$ is 
  generated by its global sections, then $L$ defines $\phi_L$, 
  an 
  equivariant embedding   into a projective space $\mathbf{P}^N$, and we can prove that $\|\cdot\|_{L,\infty}=\phi_L^\ast 
  \|\cdot\|_{{}_{\mathcal{O}(1),\infty}}$, where 
  $\|\cdot\|_{{}_{\mathcal{O}(1),\infty}}$ is the canonical
  metric of $\mathcal{O}(1)$. 
One can check, for instance \cite[Paragraph 3.3]{Maillot}
for more details.


\section{Preliminary constructions}\label{p1}

 We present here  a slight generalization of the
symplectic coordinates formalism to a large class of singular K\"ahler metrics on $\mathbf{P}^1$. This will allow us in
particular
to associate a continuous K\"ahler form $\omega_{can}$ to $\overline{g}_{can}$.

We denote by $\mathcal{G}$ the set of continuous functions $\overline{g}$ on $[0,1]$, positive on $]0,1[$ and
 verifying $\overline{g}=\overline{g}_{can}
+O(\overline{g}_{can})$ near the boundary of $[0,1]$.   Using the following transformation  $x\rightarrow \frac{1}{2}\overline{g}(2x-1)$,
one can see that the functions   $\overline{g}$ satisfying the conditions of 
Theorem \ref{y3} belong to $\mathcal{G}$ and $\overline{g}_{can}$ corresponds to $\overline{g}_{\max}$ via this transformation. We prove the following result
\begin{theorem}(see Theorem \ref{z2})
For any $\overline{g}\in \mathcal{G}$, there exists a continuous, normalized and invariant
volume form $\omega$ such that $\overline{g}_\omega=\overline{g}$.

\end{theorem}

 Let $\omega$ be a continuous  and invariant volume form  on $\mathbf{P}^1$ such that $\int_{\mathbf{P}^1}\omega =1$.
 There
exists $\|\cdot\|_\omega$, a  hermitian and invariant metric of class $\mathcal{C}^2$ on $\mathcal{O}(1)$ such that
$\omega=c_1(\mathcal{O}(1),\|\cdot\|_\omega)$. We denote by $\Psi_\omega$ the function on $\mathbf{C}$ given by
\[\Psi_\omega(z):=-\frac{i}{2\pi}\log\|1\|_m(z),\quad  z\in \mathbf{C},\]
and we set
$F_\omega(u):=\log \|1\|_\omega(\exp(-u))$ for any  $u\in \mathbf{R}$, where $1$ corresponds to the global section $x_0^m$
 and $\exp(-\cdot)$ is the following
 application $\mathbf{R}\rightarrow \mathbf{P}^1, u\rightarrow [1:e^{-u}]$. Recall that we have a diffeomorphism
$ \mathbf{R}\times \frac{\mathbf{R}}{2\pi \mathbf{Z}}\simeq \mathbf{C}^\ast$, given by $(u,\theta)\rightarrow e^{-u}e^{i\theta}$. Then on $\mathbf{C}^\ast$ we have
\begin{equation}\label{x10}
\omega_{|_{\mathbf{C}^\ast}}=-\frac{1}{2\pi} \frac{\partial^2 F_\omega}{\partial u^2}(u)du\wedge d\theta\quad\text{and}\quad \frac{\partial^2 \Psi_\omega}{\partial z\partial\overline{z}}(z)=\frac{1}{2}e^{2u}\frac{\partial^2F_\omega}{
\partial u^2}(u).
\end{equation}
Let $\check{F}_\omega$ be the Legendre-Fenchel transform associated to $F_\omega$ that is the function given on $\mathbf{R}$
by
\[
\check{F}_\omega(x)=\inf_{u\in \mathbf{R}}(x\cdot u-F_\omega(u)),
\]
 One  shows that $\check{F}_\omega$ is concave and  $\check{F}_\omega(x)$ is finite if and only if  $x\in [0,1]$. We claim that
  $\check{F}_\omega$ is a function of class $\mathcal{C}^2$ on $]0,1[$. Indeed, since $F_\omega$ is $\mathcal{C}^2$
   then the function $\theta:
   u\rightarrow \frac{\partial F}{\partial u}$ defines a $\mathcal{C}^1$-diffeomorphism from $\mathbf{R}$ onto its image (we will
    show that the image is necessarily  equal to $]0,1[$). Let
   $x\in ]0,1[$. Since $F_\omega$ is strictly concave then there exists a unique element $G_\omega(x)\in \mathbf{R}$ such that
   $\check{F}_\omega(x)=
 x G_\omega(x)-F_\omega(G_\omega(x))$. Moreover, since $F_\omega$ is $\mathcal{C}^2$ then
  $x=\frac{\partial F_\omega}{\partial u}(G_\omega(x))$. Thus $G_\omega$ is the inverse function of $\frac{\partial F_\omega}{\partial u}$.
   In particular we deduce that $]0,1[$ is included in the image of $\theta$. By differentiating  the previous
  identity we obtain
 $\frac{\partial \check{F}_\omega}{\partial x}(x)=G_\omega(x)$ for any $x\in ]0,1[$. Since
 $\int_{\mathbf{P}^1}\omega=1$, and  using \ref{x10} we deduce that
 $\frac{\partial F_\omega}{\partial u}(-\infty)-\frac{\partial F_\omega}{\partial u}(+\infty)=1$, thus
$\frac{\partial F_\omega}{\partial u}(+\infty)$ and $\frac{\partial F_\omega}{\partial u}(-\infty)$ are finite.
we conclude  that $\frac{\partial F_\omega}{\partial u}(+\infty)=0$ and $\frac{\partial F_\omega}{\partial u}(-\infty)=1$ and $\theta$ is
a $\mathcal{C}^1$-diffeomorphism between $\mathbf{R}$ and $]0,1[$. We  can then  consider
the following change of coordinates $x=\frac{\partial F_\omega}{\partial u}(u)$.
 We set \[g_\omega(x):=-G_\omega'(x)\quad\text{ and}\quad \overline{g}_\omega(x):=\frac{1}{g_\omega(x)} \quad x\in ]0,1[.\]
  One checks that
 $\overline{g}_\omega(x)=-\frac{\partial^2 F_\omega}{\partial u^2}
 (G(x)),$ for any  $x\in ]0,1[ $.

We set $\overline{h}_\omega$ the function on $[0,1]$ given by \[\overline{h}_\omega(x):=h(1,1)(e^{-G_\omega(x)}),\quad  x\in [0,1].\]

In Corollary
\ref{y7},
 we
 show that $\overline{h}_\omega$ is a continuous function on $[0,1]$, positive on $[0,1[$ such that $\lim_{x\rightarrow 1^-}
\frac{\overline{h}_\omega(x)}{(1-x)^m}$ exists and positive. Moreover, we  prove that any function $\overline{h}$ satisfying
the previous conditions defines a continuous and invariant hermitian metric on $\mathcal{O}(m)$.

\section{On the invariant metrics on $\mathbf{P}^1$}\label{r1}

We denote by  $[x_0:x_1]$ the homogenous
coordinate  on $\mathbf{P}^1$ and $z=x_1/x_0$ the affine coordinate over the open subset $\mathbf{C}=\{x_0\neq 0\}$. Let  $\mathbf{C}^\ast$ be
the complex  torus acting on $\mathbf{P}^1$ as a toric
manifold
and $\mathbf{S}^1$  the compact sub-torus in $\mathbf{C}^\ast$.

Let $m\in \mathbf{N}$. Let $\|\cdot\|$ be a continuous hermitian  metric on the line bundle $\mathcal{O}(m)$ over $\mathbf{P}^1$,
 and we suppose that $\|\cdot\|$ is invariant
under the action of  $\mathbf{S}^1$. To the metric $\|\cdot\|$ we associate a continuous function $F_{\|\cdot\|}$  defined on $\mathbf{R}$ as follows
\[
F_{\|\cdot\|}(u)=\log \|1\|(\exp(-u)),\quad  u\in \mathbf{R}
\]
  Let $\|\cdot\|_{m,\infty}$ be
the following continuous hermitian  metric on $\mathcal{O}(m)$ given by
\begin{equation}\label{x7}
\|s\|_{m,\infty}(z)=\frac{|s(z)|}{\max(1,|z|)^m},\quad z\in \mathbf{C}\,
\end{equation}
 where $s$ is a local holomorphic section of $\mathcal{O}(m)$. $\|\cdot\|_{m,\infty}$ is called  \textit{the canonical metric}
 of $\mathcal{O}(m)$. This metric is defined by the structure of $\mathbf{P}^1$ viewed as a toric manifold. In fact, we can show
 that any equivariant line bundle on a projective toric manifold  admits a continuous metric of the same nature
 defined uniquely by the combinatorial structure of the manifold (see 
 Paragraph 3.3 of \cite{Maillot} and references therein).
 \begin{remarque}
 Unfortunately  the spectral theory of Laplacians (see for instance \cite{heat}) does not hold for $\|\cdot\|_{m,\infty}$, since
 it is a singular metric. In \cite{Mounir8} and \cite{Mounir9}, we developed  a generalized spectral theory for a large class  of singular metrics
which includes 
the case of the  canonical metrics on $\mathbf{P}^1$.

 \end{remarque}

 We have  
 \begin{equation}\label{l1}
 F_{m,\infty}(u):=F_{\|\cdot\|_{m,\infty}}(u)=m\min(0,u),
 \quad  u\in \mathbf{R}.
 \end{equation}
   We can  establish  that
there exists a bijection between the set
of continuous  hermitian and invariant metrics on $\mathcal{O}(m)$ and the set of continuous
 functions $F$ on $\mathbf{R}$ such that
the function $\mathbf{C}^\ast\rightarrow \mathbf{R}, z\rightarrow F(-\log|z|) -F_{m,\infty}(-\log |z|)$ extends to a bounded
continuous function on $\mathbf{P}^1$ (see for instance \cite[Proposition
4.3.10]{Burgos2}).\\

 \begin{example}\label{x2}
Following \cite{Guillemin1} and
\cite{Guillemin}, the   Fubini-Study form $\omega_{FS}$ is viewed  as  the "canonical" K\"ahler
   metric on $\mathbf{P}^1$ which is compatible with  the standard moment map on
   $\mathbf{P}^1$.
   We set $F_0=F_{\omega_{FS}}$. An easy computation shows
 that for any $u\in \mathbf{R}$, we have
\[
F_{0}(u)=-\frac{1}{2} \log(1+e^{-2u}),\,
\frac{\partial F_0}{\partial u}(u)=\frac{e^{-2u}}{1+e^{-2u}},\,
\frac{\partial^2 F_{0} }{\partial u^2}(u)=-\frac{2e^{-2u}}{(1+e^{-2u})^2},\]
and for any $x\in [0,1]$
\[
 \,G_0(x)=-\frac{1}{2}\log\Bigl(\frac{x}{1-x}\Bigr),\quad 
 \overline{g}_0(x)=2x(1-x).
\]

\end{example}
\begin{example}\label{z3}
The second example is a singular volume form defined by the combinatorial structure of $\mathbf{P}^1$.
Notice that $T\mathbf{P}^1$ is isomorphic to $\mathcal{O}(2)$ then the metric $\|\cdot\|_{2,\infty}$ (see \eqref{x7}) induces a continuous
 volume form $\omega_{can}$ on $\mathbf{P}^1$. This form is given on $\mathbf{C}$ as follows
 \[
 \omega_{can}=\frac{i}{4\pi}\frac{dz\wedge d\overline{z}}{\max(1,|z|)^4}.
 \]
One checks  that $\int_{\mathbf{P}^1}\omega_{can}=1$. We consider the following hermitian
 continuous metric $\|\cdot\|_{can}$ on $\mathcal{O}(1)$  defined as follows
\[
\|s\|^2_{can}(z):=\frac{|s(z)|^2}{\max(1,|z|)^2}\exp(-k(z)),\quad z\in \mathbf{C},
\]
 where $s$ is a local holomorphic section of $\mathcal{O}(1)$ and $k(z)=\frac{1}{2}\min(|z|^2,\frac{1}{|z|^2}),$ for any $z\in \mathbf{C}$. We
have the following result
\begin{proposition}\label{p2}
The metric $\|\cdot\|_{can}$ is  positive (i.e the current $c_1(\mathcal{O}(1),\|\cdot\|_{can})$ is positive)
  and \[
  c_1(\mathcal{O}(1),\|\cdot\|_{can})=\omega_{can}.\]
\end{proposition}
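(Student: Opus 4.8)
The plan is to reduce the statement to a local computation on the affine chart $\C=\{x_0\neq 0\}$, on which the canonical section $x_0$ is a nonvanishing holomorphic frame of $\mc$. For such a frame the first Chern current is $c_1(\mc,\vc_{can})=-\frac{i}{2\pi}\pt\overline{\pt}\log\|x_0\|_{can}^2$, and since $\|x_0\|_{can}^2(z)=\max(1,|z|)^{-2}\exp(-k(z))$ this becomes $c_1(\mc,\vc_{can})=\frac{i}{2\pi}\pt\overline{\pt}\phi$ with
\[
\phi(z):=2\log\max(1,|z|)+k(z).
\]
So everything reduces to computing $\pt\overline{\pt}\phi$ as a current on $\C$ and checking positivity. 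The point at infinity is treated by the coordinate $w=1/z$: a direct substitution shows $\|x_1\|_{can}^2(w)=\max(1,|w|)^{-2}\exp(-k(w))$, so the potential has exactly the same shape near $w=0$ and the analysis at infinity is identical to the one at $z=0$.

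First I would record that $\phi$ is radial, equal to $\frac{1}{2}|z|^2$ on $\{|z|<1\}$ and to $2\log|z|+\frac{1}{2|z|^2}$ on $\{|z|>1\}$, each expression being smooth on its region. Differentiating gives $\frac{i}{2\pi}\pt\overline{\pt}\phi=\frac{i}{4\pi}\,dz\wedge d\z$ on $\{|z|<1\}$ and $\frac{i}{2\pi}\pt\overline{\pt}\phi=\frac{i}{4\pi}\,\frac{dz\wedge d\z}{|z|^4}$ on $\{|z|>1\}$; in both regions this is precisely $\m_{can}$.

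The main obstacle is to rule out a singular current concentrated on the circle $\{|z|=1\}$, where the two branches of $k$ are glued and along which $2\log\max(1,|z|)$ by itself would contribute a positive measure. I would handle this by proving that $\phi$ is of class $\mathcal{C}^1$ across $\{|z|=1\}$. Writing $r=|z|$, the radial derivative is $r$ for $r<1$ and $\frac{2}{r}-\frac{1}{r^3}$ for $r>1$, both equal to $1$ at $r=1$; thus the jump of $k'$ exactly cancels twice the jump of $\frac{d}{dr}\log\max(1,r)$, so no mass is carried by the circle and $\pt\overline{\pt}\phi$ coincides with the pointwise expression above (in fact the second radial derivative also matches, so $\phi$ is even $\mathcal{C}^2$). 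This yields $c_1(\mc,\vc_{can})=\m_{can}$ on all of $\p^1$.

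Positivity is then immediate: $\m_{can}=\frac{i}{4\pi}\,\frac{dz\wedge d\z}{\max(1,|z|)^4}$ is manifestly a nonnegative $(1,1)$-form, its coefficient against $\frac{i}{2}\,dz\wedge d\z$ being $\frac{1}{2\pi\max(1,|z|)^4}>0$; equivalently $\phi$ is subharmonic, since its Laplacian equals $2$ on $\{|z|<1\}$ and $2/|z|^4$ on $\{|z|>1\}$, hence is nonnegative. Therefore $c_1(\mc,\vc_{can})$ is a positive current equal to $\m_{can}$, as claimed.
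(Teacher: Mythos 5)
Your proof is correct, and it takes a genuinely different route from the paper's. You work directly with the full local potential $\phi(z)=2\log\max(1,|z|)+k(z)$ of the frame $x_0$, observe that it is radial and explicitly computable on each of $\{|z|<1\}$ and $\{|z|>1\}$, and then rule out a singular contribution on the gluing circle by checking that the one-sided radial derivatives ($r$ from inside, $\tfrac{2}{r}-\tfrac{1}{r^{3}}$ from outside) agree at $r=1$, so that $\phi$ is $\mathcal{C}^1$ (indeed $\mathcal{C}^2$) across $\{|z|=1\}$ and $\pt\overline{\pt}\phi$ is given by its pointwise values; the chart at infinity is handled by noting the potential has the same shape in $w=1/z$. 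The paper instead decomposes $c_1(\mathcal{O}(1),\vc_{can})=c_1(\mathcal{O}(1),\vc_{1,\infty})+[dd^ck]$, invokes the known identity $c_1(\mathcal{O}(1),\vc_{1,\infty})=\delta_{\s}$ from \cite[corollaire 6.3.5]{Maillot}, and computes $[dd^ck]$ against a test function by Stokes on the two regions, with the boundary term $-\int_{\s}fd^c|z|^2$ cancelling the current of integration $\delta_{\s}$. Your argument is more elementary and self-contained (no external Poincar\'e--Lelong-type input, and no distributional pairing needed once the $\mathcal{C}^1$ matching is established), whereas the paper's decomposition isolates exactly how the circle-supported mass of $\vc_{1,\infty}$ is absorbed by the kink of $k$, which is conceptually instructive; both establish the identity $c_1(\mathcal{O}(1),\vc_{can})=\m_{can}$ and positivity then follows in either case from the manifest nonnegativity of $\m_{can}$.
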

 \begin{proof}
 We have the following equality of currents
 \[
 c_1(\mathcal{O}(1),\|\cdot\|_{can})=c_1(\mathcal{O}(1),\|\cdot\|_{1,\infty})+[dd^c k]
 \]
From \cite[Corollary 6.3.5]{Maillot}, we have $c_1(\mathcal{O}(1),\|\cdot\|_{1,\infty})=\delta_{\mathbf{S}^1}$  (the current
of integration on $\mathbf{S}^1$).
Let $f$ be
 a smooth function on $\mathbf{P}^1$. We have
 \begin{align*}
 [dd^ck](f)=&\int_{\mathbf{P}^1} k\,dd^cf\\
 =&\frac{1}{2}\int_{|z|\leq 1} |z|^2 dd^cf+\frac{1}{2}\int_{|z|\geq 1} |z|^{-2} dd^cf\\
 =&\frac{1}{2}\int_{|z|\leq 1} fdd^c|z|^2 +\frac{1}{2}\int_{\mathbf{S}^1}(d^c f-fd^c|z|^2)+\frac{1}{2}
 \int_{|z|\geq 1} fdd^c |z|^{-2}\\
  &-\frac{1}{2}\int_{\mathbf{S}^1} (d^c f+fd^c|z|^{-2})\quad\text{by Stockes' 
  theorem,}\\
 =&\frac{1}{2}\int_{|z|\leq 1} fdd^c|z|^2+\frac{1}{2}\int_{|z|\geq 1} fdd^c |z|^{-2}-
 \int_{\mathbf{S}^1}f d^c|z|^2.
 \end{align*}
 Therefore,
 \[
 \int_{\mathbf{P}^1}
 fc_1(\mathcal{O}(1),\|\cdot\|_{can})=\frac{i}{4\pi }\int_{|z|\leq 1} f dz\wedge d\overline{z}+\frac{i}{4\pi }\int_{|z|\geq 1} f
 \frac{dz\wedge d\overline{z}}{|z|^4}=\int_{\mathbf{P}^1}f\omega_{can}.
 \]
Which concludes the proof of the proposition.
 \end{proof}

 We denote by $F_{can}$ the function $\mathbf{R}\rightarrow \mathbf{R}$,
  $u\rightarrow \log \|\cdot\|_{can}(\exp(-u))$. We have
 $F_{can}(u)=\min (0,u)-\frac{1}{4}\min(e^{-2u},e^{2u})$ for any $u\in \mathbf{R}$. $F_{can}$ is smooth on $\mathbf{R}\setminus \{0\}$. 
 Let $x\in [0,1]$. Since $F_{can}$ is strictly concave (this is follows from
 Proposition \ref{p2}), then there exists a unique
  $u\in \mathbf{R}$ such that $\check{F}_{can}(x)=xu-F_{can}(u)$. First, we suppose that $u\neq 0$,  then $u$ satisfies 
  $x-\frac{\partial F_{can}}{\partial u}(u)=0$. It follows that
  , $x=1-\frac{1}{2}e^{2u}$ if $u<0$ and $x=\frac{1}{2}e^{-2u}$ if $u>0$. By
  continuity, we can deduce that $\check{F}_{can}$ is  given by the following expression
 \begin{equation*}
\check{F}_{can}(x) =
\begin{cases}
-\frac{1}{2}x \log (2x)+\frac{1}{2}x & \text{if }  x\in [0,1/2],\\
-\frac{1}{2}(1-x)\log (2(1-x))+\frac{1}{2}(1-x) & \text{if } x\in [1/2,1].
\end{cases}
\end{equation*}

We see that $\check{F}_{can}$ is a $\mathcal{C}^2$ function on $[0,1]$. Then we let
 $G_{can}$ be  the function on $[0,1]$ given by $G_{can}(x)=\frac{d \check{F}_{can}}{dx }(x),$ for any $ x\in
 [0,1]$. We have
 \begin{equation*}
G_{can}(x) =
\begin{cases}-\frac{1}{2}\log(2x) & \text{if }  x\in [0,1/2],\\
\frac{1}{2}\log (2(1-x)) & \text{if } x\in [1/2,1],
\end{cases}
\end{equation*}
 We notice that $G_{can}$ defines a bijection between $]0,1[$ and $\mathbf{R}$.
 We set $g_{can}=-G'_{can}$ and $\overline{g}_{can}=\frac{1}{g_{can}}$. Then
\begin{equation}\label{metcan}
\overline{g}_{can}(x)=2\min(x,1-x),\quad x\in [0,1].
\end{equation}

We set $\overline{h}_{m,\infty}$ the function on $[0,1]$ given by
  \begin{equation}\label{metcan1}
  \overline{h}_{m,\infty}(x):=\|1\|^2_{m,\infty}(e^{-G_{can}}(x))
=\min(1,2(1-x))^m.
\end{equation}
\end{example}

\begin{remarque}\label{z4}
In \cite[Section 4]{Abreu2}, Abreu and Freitas constructed a class of smooth metrics $g$. These metrics correspond to closed surfaces
of revolution in $\mathbf{R}^3$ and  they proved that  $\overline{g}:=\frac{1}{g}$ is a  smooth functions  on $[-1,1]$  satisfying
\begin{equation}\label{x8}
\overline{g}(-1)=g(1)=0, \quad \overline{g}'(-1)=2=-\overline{g}'(1)\,\text{and}\, \sup_{[-1,1]}|\overline{g}'(x)|\leq 2.
\end{equation}
Clearly $\overline{g}\leq \overline{g}_{\max}$, where $\overline{g}_{\max}(x)=2(1-|x|)$ for any $x\in [0,1]$. Let $j\in \mathbf{N}_{\geq 1}$ and $\lambda_j(g)$
the $j$-th invariant eigenvalue of the Laplace operator defined by  $g$. They showed that
$\lambda_j(g)$ viewed as a function with variable $g$ is bounded over  the set of  smooth and invariant
 metrics corresponding to surfaces of revolution, see  \cite[Theorem 2]{Abreu2}.

Using the following transformation $\overline{g}_{[0,1]}(x):=\frac{1}{2}\overline{g}(2x-1)$ for any $x\in [0,1]$ (In particular,
$\overline{g}_{can}(x)=\frac{1}{2}\overline{g}_{\max}(2x-1)$), we see that the
smooth functions $\overline{g}$ on $[-1,1]$ satisfying \eqref{x8} belong to $\mathcal{G}$, up to the previous transformation.
As we may expect the set $\mathcal{G}$ is not reduced to functions satisfying \ref{x8}. More precisely,
we  prove (see below)
that there exist functions $\overline{g}\in \mathcal{G}$ 
satisfying $\overline{g}(-1)=g(1)=0,\overline{g}'(-1)=2=-\overline{g}'(1)$ and $\overline{g}\leq
\overline{g}_{\max}$ such that $\sup_{[-1,1]}|\overline{g}'(x)|$ can be a  large real number.

 \end{remarque}

\begin{claim}
For any $A>0$, there exists a smooth function  $\overline{g}_A$ on $[0,1]$ such that $\overline{g}_A
\in \mathcal{G}$  and
$\lim_{A\rightarrow \infty}\sup_{x\in [0,1]}|\overline{g}_A(x)|=\infty$.
\end{claim}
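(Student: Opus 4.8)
The plan is to produce each $\g_A$ by adding a large, infinitely flat bump to a single fixed smooth member of $\mathcal{G}$, so that the two constraints defining $\mathcal{G}$ — positivity on $]0,1[$ and the boundary asymptotics $\g(x)=\g_{can}(x)+O(\g_{can}(x)^2)$ — are inherited unchanged from the base function, while the interior maximum is forced to grow with $A$. Note that the statement does \emph{not} demand $\g_A\le\g_{can}$, so there is no obstruction from the fact that $\g_{can}\le 1$; membership in $\mathcal{G}$ controls only the behaviour near the endpoints.

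First I would fix a smooth base function in $\mathcal{G}$. A convenient choice is $\g_0(x)=2x(1-x)$ from Example \ref{x2}: it is smooth and strictly positive on $]0,1[$, and since $\g_{can}(x)=2x$ near $x=0$ one has $\g_0(x)-\g_{can}(x)=-2x^2=O(\g_{can}(x)^2)$, with the symmetric expansion near $x=1$; hence $\g_0\in\mathcal{G}$. Next I would fix a bump $\vf\in\cl([0,1])$ that is nonnegative, satisfies $\vf(1/2)=1$, and vanishes together with all its derivatives at both endpoints, for instance
\[
\vf(x)=\exp\!\left(1-\frac{1}{4x(1-x)}\right)\ \text{ on }]0,1[,\qquad \vf(0)=\vf(1)=0 .
\]
For each $A>0$ I then define $\g_A:=\g_0+A\,\vf$ on $[0,1]$.

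It then remains to check the three required properties. Smoothness of $\g_A$ is clear, and positivity on $]0,1[$ follows because $\g_0>0$ there and $A\vf\ge 0$. For the boundary asymptotics, the infinite-order flatness of $\vf$ gives $A\vf(x)=O(x^{N})$ for every $N$ near $x=0$, whence $\g_A(x)-\g_{can}(x)=\bigl(\g_0(x)-\g_{can}(x)\bigr)+A\vf(x)=O(x^2)=O(\g_{can}(x)^2)$, and symmetrically near $x=1$; thus $\g_A$ inherits the boundary condition from $\g_0$ and so $\g_A\in\mathcal{G}$. Finally, evaluating at the midpoint yields $\sup_{x\in[0,1]}|\g_A(x)|\ge \g_A(1/2)=\tfrac12+A$, so $\lim_{A\to\infty}\sup_{x\in[0,1]}|\g_A(x)|=\infty$.

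The construction is elementary; the one step deserving care is verifying that the perturbation $A\vf$ does not disturb the prescribed boundary behaviour, and this is exactly what the infinite-order vanishing of $\vf$ at the endpoints secures — a bump vanishing only to finite order could, for large $A$, violate the $O(\g_{can}^2)$ requirement. Everything else reduces to the two immediate facts that $\g_0\in\mathcal{G}$ and that $\g_A(1/2)=\tfrac12+A$.
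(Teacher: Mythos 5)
Your argument is internally correct and does prove the claim exactly as it is printed: $\overline{g}_A=2x(1-x)+A\varphi$ is smooth, positive on $]0,1[$, the infinite-order vanishing of $\varphi$ at the endpoints preserves the boundary condition $\overline{g}_A=\overline{g}_{can}+O(\overline{g}_{can}^2)$, and $\overline{g}_A(1/2)=\tfrac12+A$. The trouble is that the printed statement carries a typo, and the point you flagged yourself --- that the statement ``does not demand $\overline{g}_A\le\overline{g}_{can}$'' --- is exactly where your reading and the paper's intention part ways. The remark immediately preceding the claim announces what is actually to be proved: there exist functions in $\mathcal{G}$ which satisfy $\overline{g}\le \overline{g}_{\max}$ (i.e.\ $\overline{g}_A\le\overline{g}_{can}$ in the $[0,1]$ normalization) and yet have arbitrarily large \emph{derivative} supremum. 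Accordingly, the paper's own proof constructs $\overline{g}_A$ with $\overline{g}_A(x)\le 2\min(x,1-x)$ for all $x$ and concludes with $\lim_{A\to \infty}\sup_{x\in[0,1]}|\overline{g}'_A(x)|=\infty$ --- the supremum of $\overline{g}'_A$, not of $\overline{g}_A$. The missing prime is not cosmetic: under the constraint $\overline{g}_A\le\overline{g}_{can}\le 1$, which is the regime relevant both to the hypothesis of theorem \ref{z1} and to the comparison with the Abreu--Freitas class \eqref{x8} (whose members satisfy $\sup|\overline{g}'|\le 2$), the quantity $\sup|\overline{g}_A|$ can never exceed $1$, so blow-up can only occur, and is only interesting, at the level of the derivative.

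Because of this, your amplitude-scaled bump $A\varphi$ cannot be adapted to the intended statement: making $A$ large drives $\overline{g}_A$ far above $\overline{g}_{can}$, i.e.\ out of the class the claim is meant to populate. The paper performs the dual scaling: fix the height and shrink the width. It takes $\rho\ge 0$ smooth, non-zero, supported in $[1/4,3/4]$ and bounded by $1/8$, and sets $\overline{g}_A(x)=2x(1-x)+\rho(A(x-1/2)+1/2)$; the elementary inequality $\rho(A(x-1/2)+1/2)+2x(1-x)\le 2\min(x,1-x)$ (valid for $A\ge 1$ because the rescaled bump is supported in $[1/2-\frac{1}{4A},1/2+\frac{1}{4A}]$, where $\min(x,1-x)\ge 1/4$) keeps $\overline{g}_A$ below $\overline{g}_{can}$; membership in $\mathcal{G}$ is as immediate as in your construction since the bump is supported away from the boundary; and the chain rule gives $\overline{g}'_A(x_A)\sim A\rho'(x_0)$ at $x_A=\frac{1}{A}(x_0-1/2)+1/2$ with $\rho'(x_0)\neq 0$, whence $\sup_{[0,1]}|\overline{g}'_A|\to\infty$. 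If you want to repair your write-up along your own lines, replace $A\varphi(x)$ by $c\,\varphi(A(x-1/2)+1/2)$ for a small fixed $c>0$, verify the pointwise bound against $\overline{g}_{can}$, and state the conclusion for $\overline{g}'_A$; as it stands, your proof establishes only the (nearly vacuous, since $\mathcal{G}$ puts no constraint on interior values) literal statement, not the fact the paper needs.
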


\begin{proof}
Let $\rho$ be a non-zero, positive and smooth function on $\mathbf{R}$ with support in  $[1/4,3/4]$ and
bounded from above
 by $1/8$. Let $A\geq 1$, one checks  that $\rho(A(x-1/2)+1/2)\leq 1/2\min(x,1-x),$ for any $ x\in [0,1]$.
It follows that
$\rho(A(x-1/2)+1/2)+2x(1-x)\leq 2\min(x,1-x)$ for any $x\in [0,1]$. We set
$\overline{g}_A(x)=2x(1-x)+\rho(A(x-1/2)+1/2),$ for  $ x\in [0,1]$. Then  it is easy to see that $\overline{g}_A$ is
a smooth function on $[0,1]$ and
belongs to $\mathcal{G}$.  Let $x_0\in [0,1]$ is such that $\rho'(x_0)\neq 0$, and we set $x_A:
=1/A(x_0-1/2)+1/2$, then
$\overline{g}_A'(x_A)=2-4x_A+2A\rho'(x_0)\sim_{A\rightarrow \infty} A$. Thus $\lim_{A\rightarrow \infty}
\sup_{x\in [0,1]}|\overline{g}'_A(x)|=\infty$.

\end{proof}

In the sequel, we keep the same notations as in Section \eqref{p1}.

\begin{proposition}
Let $\omega$ be a smooth and invariant K\"ahler form on $\mathbf{P}^1$ such that $\int_{\mathbf{P}^1}\omega=1$. We have $\overline{g}_\omega\in \mathcal{G}$.
\end{proposition}

\begin{proof}
 Recall that for any $z\in \mathbf{C}^\ast$,
$\frac{\partial^2 \Psi_\omega}{\partial z\partial\overline{z}}(z)=\frac{1}{2}e^{2u}\frac{\partial^2F_\omega}{
\partial u^2}(u)=\frac{1}{2}
e^{2G_\omega(x)}\overline{g}_\omega(x)$ and $G_\omega$ is finite over $]0,1[$. Then $\overline{g}_\omega$ is positive on
$]0,1[$. Moreover, since $\lim_{x\rightarrow 0}G_\omega(x)=+\infty$, then
 $\lim_{x\rightarrow 0}e^{2G_\omega(x)}\overline{g}_\omega(x)=2\frac{\partial^2}{\partial z\partial\overline{z}}\Psi(0)=:l_\omega
$ which is  finite and non-zero. Let $\epsilon\in ]0,1/l_\omega[$, then there exists a positive real number $\eta$ such that
$(1/l-\epsilon)\leq e^{-2G_\omega(x) }g_{\omega}(x)\leq 1/l+\epsilon
 $ for any $x<\eta$.  It follows that  $(1/l-\epsilon)x\leq -\frac{1}{2}
 \int_0^x e^{-2G_\omega(x) }G'_{\omega}(x)\leq (1/l+\epsilon)x$ for any $x\leq \eta$. Therefore
 \begin{equation}\label{x6}
 (1/l_\omega-\epsilon)2x\leq e^{-2G_\omega(x)}\leq (1/l_\omega+\epsilon)2x, \quad x\leq \eta.
 \end{equation}
It follows that
\[
\lim_{x\rightarrow 0}\frac{\overline{g}_\omega(x)}{x}=\lim_{x\rightarrow 0}\frac{e^{-2G_\omega(x)}}{x}\lim_{x\rightarrow 0}
e^{2G_\omega(x)}\overline{g}_\omega(x)=2.
\]
  We claim that $\overline{g}_\omega(x) e^{-2G_\omega(x)}=l_\omega+O(x)$ for $0<x\ll 1$. Indeed, since
  $\frac{\partial^2 \Psi_\omega}{\partial z\partial\overline{z}}$ is an invariant smooth function, then $\frac{\partial^2 \Psi_\omega}{\partial z\partial\overline{z}}(z)=l_\omega+
O(|z|^2)$ in a small open neighborhood of $z=0$. So, $\frac{\partial^2 \Psi_\omega}{\partial z\partial\overline{z}}(e^{-G(x)})=\frac{l_\omega}{2}+
O(e^{-2G_\omega(x)})$ for $0<x\ll 1$. From \eqref{x6}, we deduce that $\frac{\partial^2 \Psi_\omega}{\partial
z\partial\overline{z}}(e^{-2G(x)})=\frac{l_\omega}{2}+O(x)$ for $0<x\ll 1$.

Therefore,
\[
\frac{1}{2}\frac{d e^{-2G_\omega(x)}}{dx}=g_\omega(x) e^{-2G_\omega(x)}=\frac{1}{l_\omega}+O(x).
\]
Then
\begin{equation}\label{y5}
e^{-2G_\omega(x)}=\frac{2}{l_\omega}x+O(x^2)
\end{equation}
So,
\[
\overline{g}_{\omega}(x)=e^{-2G_\omega(x)}(l_\omega+O(x))=2x+O(x^2).
\]

To conclude the proof of the theorem we need to prove the following
\begin{equation}\label{x9}
\overline{g}_{\omega}(x)=2(1-x)+O((1-x)^2) \quad \forall\,0<1-x\ll 1.
\end{equation}
We consider the following biholomorphic map $\tau:\mathbf{P}^1\rightarrow \mathbf{P}^1, z\rightarrow
z^{-1}$. Then $\tau^\ast \omega$ is smooth, K\"ahler   and invariant. We claim that
\[
F_{\tau^\ast \omega}(-u)=-u+F_\omega(u)\quad \forall\, u\in \mathbf{R}.
\]
This is follows from  the following  equality over $\mathbf{C}^\ast$  $\|x_1\|=|z|\|x_0\|$. Then, for any $x\in [0,1]$
\[
\check{F}_{\tau^\ast\omega}(x)=\inf_{u\in \mathbf{R}}(xu-F_{\tau^\ast\omega}(u))=\inf_{u\in \mathbf{R}}(u(1-x)-F_\omega(u))=\check{F}_\omega(1-x),
\]
Thus \begin{equation}\label{y6}
G_{\tau^\ast \omega}(x)=-G_\omega(1-x),
\end{equation}  So $\overline{g}_{\tau^\ast\omega}(x)=\overline{g}_\omega(1-x)$.  We conclude that the proof of
 \eqref{x9} can be deduced  from the first part of the proof.
\end{proof}

\begin{theorem}\label{z2}
For any $\overline{g}\in \mathcal{G}$, there exists a continuous, normalized and invariant
volume form $\omega$ such that $\overline{g}_\omega=\overline{g}$.

\end{theorem}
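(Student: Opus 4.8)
The plan is to invert the correspondence $\m\mapsto\g_\m$ from the introduction: starting from $\g\in\mathcal{G}$ I reconstruct successively the function $G$, the profile $F$, and a continuous invariant hermitian metric on $\mathcal{O}(1)$ whose curvature current is the desired volume form. First I set $g:=1/\g$, continuous and positive on $]0,1[$, and record its boundary behaviour. Because $\g\in\mathcal{G}$, i.e. $\g=\g_{can}+O(\g_{can}^2)$, one has $\g(x)=2x+O(x^2)$ near $0$ and $\g(x)=2(1-x)+O((1-x)^2)$ near $1$, hence $g(x)=\frac{1}{2x}+O(1)$ near $0$ and $g(x)=\frac{1}{2(1-x)}+O(1)$ near $1$, the remainders being integrable at each endpoint. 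I then define $G(x):=-\int_{1/2}^{x}g(t)\,dt$ on $]0,1[$; since $g>0$ this is a strictly decreasing map of class $\mathcal{C}^1$, and integrating the expansions above gives $G(x)=-\frac12\log x+C_0+o(1)$ as $x\to0^+$ and $G(x)=\frac12\log(1-x)+C_1+o(1)$ as $x\to1^-$ for constants $C_0,C_1$. In particular $G(0^+)=+\infty$ and $G(1^-)=-\infty$, so $G\colon]0,1[\to\R$ is a bijection; I put $\theta:=G^{-1}$ and $F(u):=\int_0^u\theta(s)\,ds$.

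Next I would verify that $F$ is an admissible profile. By construction $F\in\mathcal{C}^2(\R)$ with $F'=\theta$ strictly decreasing, $F'(+\infty)=0$, $F'(-\infty)=1$, and $F''=-\g\circ\theta<0$, so $F$ is strictly concave. Inverting the endpoint asymptotics of $G$ gives $\theta(u)=O(e^{-2u})$ at $+\infty$ and $1-\theta(u)=O(e^{2u})$ at $-\infty$, whence $\int_0^{+\infty}\theta$ and $\int_{-\infty}^{0}(\theta-1)$ converge; thus $F(u)$ has a finite limit as $u\to+\infty$ and $F(u)-u$ has a finite limit as $u\to-\infty$. Since $F_{1,\infty}(u)=\min(0,u)$, this says exactly that $z\mapsto F(-\log|z|)-F_{1,\infty}(-\log|z|)$ is continuous on $\C^\ast$ and extends continuously across $z=0,\infty$. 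By the bijection between invariant metrics and profiles recalled above (with $m=1$), $F$ then determines a continuous invariant hermitian metric $\vc_\m$ on $\mathcal{O}(1)$ through $\log\|1\|_\m(\exp(-u))=F(u)$, and I set $\m:=c_1(\mathcal{O}(1),\vc_\m)$.

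Finally I would confirm the three assertions. Tautologically $F_\m=F$, hence $\check{F}_\m=\check{F}$, and since $F$ is $\mathcal{C}^2$ and strictly concave the formulas of the introduction yield $G_\m=\frac{d\check{F}_\m}{dx}=G$, so $g_\m=-G_\m'=g$ and $\g_\m=\g$; invariance under $\s$ is immediate, and $\int_{\p^1}\m=F'(-\infty)-F'(+\infty)=1-0=1$ by \eqref{x10}. I expect the main obstacle to be the remaining point, namely that $\m$ is truly a \emph{continuous} volume form at the two fixed points of the torus. On $\C^\ast$ its density equals $\frac12 e^{2G(x)}\g(x)$ with $x=\theta(-\log|z|)$ by \eqref{x10}, plainly continuous and positive, so the only issue is its limit at the fixed points. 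Here the quadratic control in $\mathcal{G}$ is decisive: from $G(x)=-\frac12\log x+C_0+o(1)$ one gets $e^{2G(x)}=e^{2C_0}x^{-1}(1+o(1))$, which against $\g(x)=2x+O(x^2)$ forces $e^{2G(x)}\g(x)\to 2e^{2C_0}$, a finite positive constant, so the radial density extends continuously and positively across $z=0$; the point $z=\infty$ follows from the symmetry $\tau\colon z\mapsto z^{-1}$, which by \eqref{y6} replaces $\g$ by $\g(1-\cdot)$, again in $\mathcal{G}$. This is precisely where a mere $O(\g_{can})$ would not suffice, since the density could then blow up at a fixed point; the sharper $O(\g_{can}^2)$ is what keeps it bounded and positive.
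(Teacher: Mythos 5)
Your construction is correct and follows the paper's overall strategy: set $g=1/\g$, reconstruct $G$ by integrating $-g$, invert it to obtain a strictly concave $\mathcal{C}^2$ profile $F$ with $F'=G^{-1}$, and then show that the associated invariant $(1,1)$-form extends continuously and positively across the two torus-fixed points, the normalization $\int_{\p^1}\m=F'(-\infty)-F'(+\infty)=1$ and the identity $\g_\m=\g$ being formal consequences. (The paper packages the reconstruction as the Legendre transform of $L_g(x)=\int_{1/2}^x G_g(s)\,ds$, which yields the same $F$ as your $F(u)=\int_0^u G^{-1}(s)\,ds$ up to an additive constant.) Where you genuinely diverge is in the key technical step, the behaviour of the density at the fixed points: the paper works in the variable $u$, converting $\g(x)=2x+O(x^2)$ into the estimate $-\frac{\pt^2 F}{\pt u^2}=2\frac{\pt F}{\pt u}+O\bigl((\frac{\pt F}{\pt u})^2\bigr)$ for $u\gg 1$ and integrating it twice to show that $e^{2u}\frac{\pt F}{\pt u}$ has a finite nonzero limit $1/l_g$; you instead integrate $g(t)-\frac{1}{2t}=O(1)$ directly in the symplectic variable to get $G(x)=-\frac{1}{2}\log x+C_0+O(x)$, whence $e^{2G(x)}\g(x)\to 2e^{2C_0}$ at once. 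Your route is more elementary and makes transparent exactly where the quadratic error $O(\g_{can}^2)$ enters (a point you rightly stress: with only $O(\g_{can})$ the constant $C_0$ need not exist and the density can degenerate). One small point to make explicit: you define $\m$ as the current $c_1(\mathcal{O}(1),\vc_\m)$ and check that its smooth part on $\C^\ast$ extends continuously; to conclude that $\m$ equals that continuous extension you should also observe that the potential $F(-\log|z|)$ stays bounded near $z=0$ and $z=\infty$, so its $dd^c$ carries no atoms there --- or simply define $\omega$ directly as the continuous extension of the form on $\C^\ast$, as the paper does.
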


\begin{proof}

Let $\overline{g}\in \mathcal{G}$ and we  set $g:=1/\overline{g}$. By hypothesis we can find two positive constants   $k$ and $k'$
such that
\begin{equation}\label{x5}
k\leq \frac{\overline{g}(x)}{\overline{g}_{can}(x)}=\frac{g_{can}(x)}{g(x)}\leq k',\quad  x\in [0,1].
\end{equation}
We  set \[G_g(x):=-\int_{1/2}^x g(s)ds \quad \text{and}\quad L_g(x):=\int_{1/2}^x G_g(s)ds,\quad  x\in ]0,1[
\] Since $g$ is
positive, then $L_g$ is strictly concave on $]0,1[$. By \eqref{x5} we can show that $L_g$ is of Legendre type\footnote
{ Let $C\subset \mathbf{R}$ an open convex set. A differentiable concave function $f:C\rightarrow \mathbf{R}$ is of Legendre type
 if it is strictly concave and $\lim_{i\rightarrow \infty}|\frac{\partial f}{\partial u}(u_i)|=\infty$ for every sequence
 $(u_i)_{i\geq 1}$ converging to a point in the boundary of $C$.}
 on $]0,1[$. It follows
that the function  $x\rightarrow \frac{\partial L_g}{\partial x}$ defines a $\mathcal{C}^1$-diffeomorphism  between $]0,1[$ and $\mathbf{R}$.
 Moreover, we can prove there exist  two constants $\alpha$, $\alpha'$ such that
 $\alpha\leq L_g(x)\leq \alpha',$ for any $ x\in [0,1]$. By the same arguments as in Section \eqref{p1}, we show that
   the function $F_g$ given on $\mathbf{R}$ by $F_g(u)=
 \inf_{x\in [0,1]}(ux-L_g(x))$ is of class $\mathcal{C}^2$ 
 and 
  $\frac{\partial F_g}{\partial u}$ is the inverse function of $
 \frac{\partial L_g}{\partial x}$ and
 satisfies
    \begin{equation}\label{y1}
    -\alpha'+F_{1,\infty}(u)\leq F_g(u)\leq -\alpha+F_{1,\infty}(u),\quad
    u\in \mathbf{R}.
    \end{equation}
(For the definition of $F_{1,\infty}$ see \eqref{l1}).

  We consider the following differential form on $\mathbf{C}^\ast(\simeq \mathbf{R}\times \mathbf{R}/2\pi \mathbf{Z})$
  \[
 \omega_g:= -\frac{i}{4\pi }\frac{\partial^2 F_g}{\partial u^2}e^{2u} dz\wedge d\overline{z}.
  \]

Since $\frac{\partial^2 F_g}{\partial u^2}(u)=\overline{g}(x)$ then $\omega_g$ is  positive on $\mathbf{C}^\ast$.  By definition of $\overline{g}$, we
have
 $\overline{g}(x)=2x+O(x^2)$ for $0<x\ll 1$. Then $-\frac{\partial^2F_g}{\partial u^2}(u)=2\frac{\partial F_g}{\partial u}+O((\frac{\partial F_g}{\partial u})^2
  )$ for $u\gg 1$. This  gives two equalities
  \[
  \frac{\partial}{\partial u}\bigl(e^{2u}\frac{\partial F_g}{\partial u}\bigr)^{-1}=O(e^{-2u})\,\text{and}\,\, \frac{\partial}{\partial u}\log |e^{2u}
  \frac{\partial F_g}{\partial u}|=O(
  \frac{\partial F_g}{\partial u}),\quad  u\gg 1.
  \]
  The first equality gives $(e^{2u}\frac{\partial F_g}{\partial u}(u))^{-1}-(e^{2v}\frac{\partial F_g}{\partial u}(v))^{-1}=O
  (e^{-2u}-e^{-2v}) $ for $u,v\gg 1$. This shows that the following limit $l_g:=-\lim_{u\rightarrow \infty}(
  e^{2u}\frac{\partial F_g}{\partial u}(u))^{-1}$ exists and finite. The limit $l_g$ is necessarily non-zero. Indeed, by  the second
  equality we have
 $e^{2u}\frac{\partial F_g}{\partial u}(u)\leq e^{2v}\frac{\partial F_g}{\partial u}(v)e^{O( F_g(v)-F_g(u))}$ for $u,v\gg 1$ and
 from \eqref{y1}, the RHS of the previous inequality is bounded for fixed $v$ and $u\gg 1$. Therefore,
  \[
  -e^{2u}\frac{\partial^2F_g}{\partial u^2}(u)=\frac{2}{l_g}+o(1),\quad  u\gg 1.
  \]
  Then the form $\omega_g$ extends to $ \mathbf{C}$.\\

  Let $\overline{g}^\ast$ be the function on $[0,1]$ given by 
  \[\overline{g}^\ast(x)=\overline{g}(1-x),\quad x\in [0,1].\] Clearly $\overline{g}^\ast\in \mathcal{G}$. We set
  $g^\ast=1/\overline{g}^{\ast}$. We have  $L_g(x)=L_{g^\ast}(1-x)$ for any $x\in [0,1]$.  Then
  \[
    -e^{2u}\frac{\partial^2F_{g^\ast}}{\partial u^2}(u)=\frac{2}{l_{g^\ast}}+o(1),\quad  \, u\gg 1.
  \]
  As before  we can show that
  $F_{g^\ast}(-u)=-u+F_g(u),$  for any $ u\in \mathbf{R}$.  We deduce that
  \[
    -e^{-2u}\frac{\partial^2F_{g}}{\partial u^2}(u)=\frac{2}{l_{g^\ast}}+o(1)\quad  \, (-u)\gg 1.
  \]
We conclude that $\omega_g$  extends to a positive, invariant and continuous $(1,1)$-form on $\mathbf{P}^1$.
We denote it also by $\omega_g$. Finally,
notice
that $\int_{\mathbf{P}^1}\omega_g=\frac{\partial F_g}{\partial u}(-\infty)-\frac{\partial F_g}{\partial u}(+\infty)=1$.
\end{proof}

\begin{corollary}\label{y7} Let $h$ be a continuous and invariant hermitian metric on $\mathcal{O}(m)$. Then the
function $\overline{h}_\omega$ on $[0,1]$ given by
\[
\overline{h}_\omega(x)=h(1,1)(\exp(-G_\omega(x))),\quad x\in [0,1],
\]
is continuous on $[0,1]$, positive on $[0,1[$ and the limit $\lim_{x\rightarrow 1^-}\frac{\overline{h}_\omega(x)}{(1-x)^m}$ exists, finite and non-zero. Moreover, any continuous function $\overline{h}$ verifying the previous conditions, defines
a continuous and invariant hermitian metric on $\mathcal{O}(m)$.
\end{corollary}

\begin{proof}
Let $h$ be a continuous and invariant hermitian metric on $\mathcal{O}(m)$. There exists a continuous and
invariant function $f$ on $\mathbf{P}^1$ such that $h=e^f h_{m,\infty}$. Then it suffices to prove the corollary
for the metric $h_{m,\infty}$. We have $\overline{h}_{m,\infty}(x)=\min(1,e^{2m G_\omega(x)})$ for any $x\in [0,1]$. Clearly
$\overline{h}_{m,\infty}$ is continuous on $]0,1[$. By \eqref{y5} and \eqref{y6}, we deduce that $\overline{h}_{m,\infty}(x)=1$
for $0<x\ll 1$ and $\overline{h}_{m,\infty}(x)=\frac{2^m}{l_\omega^m}(1-x)^m+O((1-x)^{m+1})$.

Now, let $\overline{h}$ be a continuous function on $[0,1]$, positive on $[0,1[$ and such that the limit
 $\lim_{x\rightarrow 1^-}\frac{\overline{h}_\omega(x)}{(1-x)^m}$ exists, finite and non-zero. The function $x\rightarrow
 \frac{\overline{h}(x)}{
\overline{h}_{m,\infty}(x)}$ is continuous and positive on $[0,1]$. Thus it extends to a continuous, positive
and invariant function on $\mathbf{P}^1$. Therefore, $\overline{h}$ defines a continuous hermitian metric on $\mathcal{O}
(m)$.

\end{proof}

\section{The invariant spectrum of the Laplacian}\label{ss1}

Let $m\in \mathbf{N}$. Let  $A^{(0,0)}(\mathbf{P}^1,\mathcal{O}(m))$ be 
the space of smooth functions on $\mathbf{P}^1$ with coefficients
in $\mathcal{O}(m)$. Let $\omega$ be a smooth,   invariant   and normalized K\"ahler form on $\mathbf{P}^1$ and $h$ an  invariant
smooth hermitian metric on $\mathcal{O}(m)$. The metrics $\omega$ and $h$ induce a $L^2$-scalar
  product $(,)_{\omega,h}$ on
 $A^{(0,0)}(\mathbf{P}^1,\mathcal{O}(m))$ given  as follows
 \[
 (s,t)_{\omega,h}=\int_{x\in \mathbf{P}^1}h(s(x),t(x))\omega(x),
 \]
 for $s,t \in A^{(0,0)}(\mathbf{P}^1,\mathcal{O}(m))$. The Cauchy-Riemann operator $\overline{\partial}_{\mathcal{O}(m)}:
 A^{(0,0)}(\mathbf{P}^1,\mathcal{O}(m))\rightarrow A^{(0,1)}(\mathbf{P}^1,\mathcal{O}(m))$ has  an adjoint for the $L^2$-scalar
  product, i.e there is a map $\overline{\partial}^\ast_{\mathcal{O}(m)}:A^{(0,1)}(\mathbf{P}^1,\mathcal{O}(m))\rightarrow
  A^{(0,0)}(\mathbf{P}^1,\mathcal{O}(m)) $  such that
  $(s,\overline{\partial}^\ast_{\mathcal{O}(m)}t)_{\omega,h}=(\overline{\partial}_{\mathcal{O}(m)}s,t)_{\omega,h}
  $ for any $s\in A^{(0,0)}(\mathbf{P}^1,\mathcal{O}(m))$ and $t\in A^{(0,1)}(\mathbf{P}^1,\mathcal{O}(m))$. The operator
  $
  \Delta_{\omega,h}:= \overline{\partial}^\ast_{\mathcal{O}(m)} \overline{\partial}_{\mathcal{O}(m)}$ acting on
  $A^{(0,0)}(\mathbf{P}^1,\mathcal{O}(m))$ is called the Laplacian operator.
  We denote by $H_2$ the completion of $A^{(0,0)}(\mathbf{P}^1,\mathcal{O}(m))$
 with respect to the norm $\|\cdot\|_2$ defined as follows
 \[
 \|s\|^2_2=\int_{\mathbf{P}^1}h(s,s)\omega+ \frac{i}{2\pi}\int_{\mathbf{P}^1}h(\frac{\partial s}{\partial \overline{z}},\frac{\partial s}{\partial \overline{z}})dz\wedge d\overline{z},
 \]
 for any $s\in A^{(0,0)}(\mathbf{P}^1,\mathcal{O}(m))$.
 It is well known
 that  $\Delta_{\omega,h}$ admits a maximal and positive self adjoint extension to $H_2$ and  has a discrete, infinite and positive spectrum. \\

  In \cite{Mounir1}, we associated to the metrics $\omega_{can}$ and $h_{m,\infty}$ a singular Laplacian operator
 $\Delta_{\overline{\mathcal{O}(m)}_\infty}$ which extends the definition of the classical one,
  and we showed that this operator has the same properties as in the classical situation. More precisely,
  we established that $\Delta_{{\overline{\mathcal{O}(m)}}_\infty}$ admits a maximal positive and self-adjoint  extension to $H_2$ see \cite[Theorem 1.3]{Mounir1} and  has a discrete, infinite and positive spectrum
  \cite[Theorem 1.4]{Mounir1}. Moreover,  we  computed
 it explicitly.
 \begin{remarque}
Following the notations in this article,  we  set
 $\Delta_{\omega_{can},h_{m,\infty}}:=2\Delta_{\overline{\mathcal{O}(m)}_\infty}$. The factor 2 is added here
  since the volume form 
 was not normalized in  \cite{Mounir1}.
 \end{remarque}

 Let  $n\in \mathbf{Z}$ and $J_n$ the Bessel function of order $n$.  We consider the  function $L_{m,n}$
defined on  $\mathbf{C}^\ast$ as follows:
\[
L_{m,n}(z)=-z^m\frac{d}{dz}\bigl(z^{-m} J_n(z)J_{n-m}(z)\bigr),\quad 
z\in \mathbf{C}^\ast.
\]
   We have
  \begin{theorem}\label{y2}
For any $m\in\mathbf{N}$, $\Delta_{\omega_{can},h_{m,\infty}}$ admits a discrete, positive and infinite  spectrum, and

{{} \[
\mathrm{Spec}(\Delta_{\omega_{can},h_{m,\infty}})=\Bigl\{0\Bigr\}\bigcup \Bigl\{\frac{\lambda^2}{2} \Bigl|\, \exists n\in \mathbf{N},\,
L_{m,n}(\lambda)=0 \Bigr\}.
\]}
If we denote by $0<\lambda_1(\omega_{can},h_{m,\infty})<\lambda_2(\omega_{can},h_{m,\infty})<\ldots $
the   invariant eigenvalues of $\Delta_{\omega_{can},h_{m,\infty}}$. Then
 the set of invariant eigenvalues of
$\Delta_{\omega_{can},h_{m,\infty}}$ is equal to $\bigl\{\frac{\lambda^2}{2} |\,  L_{m,0}(\lambda)=0  \bigr\}$.
\end{theorem}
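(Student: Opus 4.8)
The plan is to reduce the computation to a one-parameter family of ordinary differential equations by exploiting the $\s$-symmetry, and then to identify the resulting transcendental eigenvalue condition with $L_{m,n}(\la)=0$. The qualitative assertions---that $\Delta_{\m_{can},h_{m,\infty}}$ admits a maximal self-adjoint extension with discrete, positive and infinite spectrum---are exactly \cite[theorem 0.3, theorem0.4]{Mounir1}, so it remains to determine the eigenvalues explicitly and to single out the invariant ones. First I would decompose $H_2$ into isotypical components for the $\s$-action: a section written in the frame $x_0^m$ as $s=f\,x_0^m$ with $f=\phi(r)e^{in\theta}$ (where $z=re^{i\theta}$) is sent by $\Delta_{\m_{can},h_{m,\infty}}$ into the same mode $n\in\Z$, since both $\m_{can}$ and $h_{m,\infty}$ are $\s$-invariant. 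Thus the operator is block-diagonal and its study reduces to a radial operator on each mode.

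The crucial geometric feature is that $\m_{can}$ and $h_{m,\infty}$ are flat on each of the two closed hemispheres $\{|z|\le 1\}$ and $\{|z|\ge 1\}$: in the frame $x_0^m$ one has $h_{m,\infty}=1$ and $\m_{can}=\frac{i}{4\pi}dz\wedge d\z$ on $\{|z|\le1\}$, and symmetrically in the frame $x_1^m$ with coordinate $w=1/z$ on $\{|z|\ge1\}$. Hence on each hemisphere the modal equation for $\Delta_{\m_{can},h_{m,\infty}}s=\frac{\la^2}{2}s$ reduces, after the standard passage to polar coordinates for the flat $\overline{\pt}$-Laplacian, to Bessel's equation: $\phi''+\frac1r\phi'-\frac{n^2}{r^2}\phi+\la^2\phi=0$. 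The solution regular at the pole $z=0$ is $J_{|n|}(\la r)$, and the solution regular at $z=\infty$ is $J_{|n-m|}(\la/r)$, the shift of the Bessel order by $m$ between the two poles being forced by the transition function $x_0^m=z^{-m}x_1^m$. Each admissible $\la$ then produces the eigenvalue $\frac{\la^2}{2}$, matching the normalization of \cite{Mounir1} (recall $\Delta_{\m_{can},h_{m,\infty}}=2\Delta_{\overline{\mathcal{O}(m)}_\infty}$).

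Next I would glue the two regular branches across the equator $\{|z|=1\}$. Writing both in the single frame $x_1^m$ multiplies the inner branch by the radial factor $r^{-m}$ coming from $x_0^m=z^{-m}x_1^m$, so that continuity of the section and of its normal derivative at $r=1$ becomes a homogeneous linear system in two constants whose vanishing determinant reads
\[
\la\,\frac{d}{d\la}\bigl(J_n(\la)J_{n-m}(\la)\bigr)-m\,J_n(\la)J_{n-m}(\la)=0,
\]
which is precisely $L_{m,n}(\la)=0$. Collecting this over all modes, and using $J_{-k}=(-1)^kJ_k$ together with the symmetry $n\mapsto -n$ to reduce the index set to $n\in\N$, yields $\mathrm{Spec}(\Delta_{\m_{can},h_{m,\infty}})=\{0\}\cup\{\frac{\la^2}{2}\mid \exists n\in\N,\ L_{m,n}(\la)=0\}$, the value $0$ being the contribution of $\ker\overline{\pt}=H^0(\p^1,\mathcal{O}(m))$. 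Finally, the $\s$-invariant sections are exactly the radial sections $\phi(r)\,x_0^m$, i.e. those of Fourier mode $n=0$ in the equivariant frame $x_0^m$ (which is fixed by the action), so the positive invariant eigenvalues are $\{\frac{\la^2}{2}\mid L_{m,0}(\la)=0\}$.

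The main obstacle is the gluing step. One must check that $C^1$-matching at the equator---equivalently, regularity at both poles---is the boundary condition selecting the maximal self-adjoint extension of \cite{Mounir1}, and then verify, via the Bessel recurrences $z J_\nu'=\nu J_\nu-zJ_{\nu+1}=zJ_{\nu-1}-\nu J_\nu$, that the $2\times2$ determinant collapses exactly to $\la(J_nJ_{n-m})'-mJ_nJ_{n-m}$ rather than to a companion expression with the opposite sign. Tracking the order shift and the $r^{-m}$ factor produced by the transition function is the delicate point, and getting its sign right is what forces the combination $\frac{d}{dz}\bigl(z^{-m}J_nJ_{n-m}\bigr)$ with the stated weight $z^{-m}$.
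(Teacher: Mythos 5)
The paper does not actually prove this theorem: its ``proof'' is the single line ``See \cite[theorems 0.6, 0.7]{Mounir1}'', so the entire content is outsourced to the earlier reference. Your proposal is therefore a reconstruction rather than an alternative route, and it is a credible one: the two qualitative inputs (maximal self-adjoint extension, discreteness) are exactly the statements the paper imports from \cite[theorems 0.3, 0.4]{Mounir1}, and the quantitative part is plausibly obtained, as you do, by separation of variables on the two flat hemispheres. I checked your key computation: in the frame $x_0^m$ the inner branch is $J_n(\la r)$ and the outer branch is $c\,r^mJ_{n-m}(\la/r)$, and $C^1$-matching at $r=1$ gives the determinant $mJ_nJ_{n-m}-\la(J_nJ_{n-m})'=\la L_{m,n}(\la)$, so the vanishing condition is indeed $L_{m,n}(\la)=0$; the identification of the invariant spectrum with the mode $n=0$ also agrees with the paper's description of invariant elements as $f\otimes x_0^m$ with $f$ an $\s$-invariant function.

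Two points deserve flagging. First, the reduction of the mode index from $\Z$ to $\N$ is via $n\mapsto m-n$ (which swaps the two Bessel orders and multiplies $J_nJ_{n-m}$ by $(-1)^m$), not via $n\mapsto -n$ as you write; the conclusion is unaffected but the stated symmetry is the wrong one. Second, the genuine gap is the one you identify yourself: you must show that the $C^1$ transmission condition across the equator (equivalently, regularity of both branches at the poles and membership in $H_2$) characterizes the domain of the maximal self-adjoint extension constructed in \cite{Mounir1} for these \emph{singular} (only continuous, piecewise flat) metrics, and that no further boundary conditions or extra spectrum arise there. That is precisely the analytic content that the paper's citation to \cite[theorems 0.3, 0.4, 0.6, 0.7]{Mounir1} is carrying, and without it your argument only shows that every zero of some $L_{m,n}$ yields a candidate eigenvalue, not that the list is exhaustive. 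Since you cite the same source for exactly this step, your sketch is acceptable as a proof outline, but it is not self-contained at the one place where self-containment would be nontrivial.
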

\begin{proof}
See \cite[Theorem 1.4]{Mounir1}.
\end{proof}

Since $\mathcal{O}(m)$ is endowed with its global sections, 
then an element $\xi \in
A^{(0,0)}(\mathbf{P}^1,\mathcal{O}(m))$  can be written in the following form $f\otimes 1$ where $f=\sum_{j=0}^m
f_j z^j$ and $f_j$ are smooth functions on $\mathbf{P}^1$. It follows that  an
 invariant element in $A^{(0,0)}(\mathbf{P}^1,\mathcal{O}(m))$ corresponds to a smooth function $f$ of the previous form invariant under the action of  $\mathbf{S}^1$.  To  this function $f$  we associate a smooth function $\phi_f$ on $]0,1[$ as follows
 $\phi_f(x)=f(\exp(-G(x)))$, for any  $x\in ]0,1[$. We set \[\overline{h}(x):=h(1,1)(\exp (-G_\omega(x))), \quad x\in  ]0,1[,
 \] and we consider the following norm on $\mathcal{C}^\infty
(]0,1[)$
\[
\|\varphi\|^2_2=\int_0^1\overline{h}_\omega(x)\varphi(x)^2dx+\int_0^1 \overline{h}_\omega(x)|\varphi'(x)|^2 \overline{g}_\omega(x)dx, \quad \varphi \in \mathcal{C}^\infty
(]0,1[).
\]
One checks easily that $\|\phi_f\|_2=\|f\otimes 1\|_2$. We set $ K_{\omega,h}=\{\varphi\in \mathcal{C}^\infty
(]0,1[)|\, \|\varphi\|^2<\infty\}$  and we denote by $H_2^0$ the completion of $ K_{\omega,h}$  with respect to $\|\cdot\|_2$.
This completion doesn't depend on the choice of the metrics, since it is the restriction of
$\|\cdot\|_2$ to the space
of invariant elements in $H_2$ which doesn't depend on $\omega$ and on $h$.\\

We denote by $R_{\omega,h}$ the following functional
\[
R_{\omega,h}(\varphi):=\frac{\int_0^1 \overline{h}_\omega(x)\overline{g}_\omega(x)|\varphi'(x)|^2
dx}{\int_0^1\overline{h}_\omega(x)\varphi(x)^2dx},\quad
 \varphi\in K_{\omega,h}\setminus\{0\}.
\]
 We denote by $0=\lambda_0(\omega,h)<\lambda_1(\omega,h)<\lambda_1(\omega,h)\ldots$ the
 invariant eigenvalues of $\Delta_{\omega,h}$ then by the Min-Max principle,
 \[
 \lambda_j(\omega,h)=\inf_{\varphi\in K_{\omega,h,j}\setminus\{0\}}R_{\omega,h}(\varphi), \quad  j=1,2,\ldots
 \]
where  $K_{\omega,h,j}$ is  the orthogonal to the subspace of $H_2^0$ spanned by the eigenfunctions associated to
$\lambda_k(\omega,h)$ for $k=0,\ldots,j-1$.
 \begin{theorem}\label{y4}
 Suppose that $1<\frac{\overline{h}_\omega}{\overline{h}_{m,\infty}}<\frac{\overline{g}_{can}}{\overline{g}_\omega}$. Then
 \[
 \lambda_j(\omega,h)\leq \lambda_j(\omega_{can},h_{m,\infty}),\quad
  j=1,2,\ldots
 \]

 \end{theorem}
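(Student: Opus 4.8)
The plan is to derive the eigenvalue inequality from a pointwise comparison of the two Rayleigh quotients, fed into the variational characterisation of the spectrum. The form of the Min--Max principle recalled before the statement, namely $\la_j(\omega,h)=\inf_{\vf\in K_{\m,h,j}\setminus\{0\}}R_{\m,h}(\vf)$, is awkward for comparing two different metrics, since the space $K_{\m,h,j}$ is defined by orthogonality with respect to the $L^2$-product attached to $(\m,h)$, and this product changes with the metric. I would therefore work with the equivalent Courant--Fischer form
\[
\la_j(\omega,h)=\inf_{\substack{V\subset H_2^0\\ \dim V=j+1}}\ \sup_{\vf\in V\setminus\{0\}}R_{\m,h}(\vf),
\]
and the analogous formula for $(\m_{can},h_{m,\infty})$. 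The point that makes this workable is that, as noted above, the completion $H_2^0$ is independent of the metric; hence both quotients $R_{\m,h}$ and $R_{\m_{can},h_{m,\infty}}$ live on one and the same Hilbert space, and both infima range over the same family of finite-dimensional subspaces $V$. For the singular operator $\Delta_{\m_{can},h_{m,\infty}}$ the discreteness of the spectrum and the validity of this formula are provided by Theorem \ref{y2} and \cite{Mounir1}.

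The next step is the pointwise inequality $R_{\m,h}(\vf)\leq R_{\m_{can},h_{m,\infty}}(\vf)$ for all $\vf\in H_2^0\setminus\{0\}$. I would unwind the hypothesis $1<\frac{\overline{h}_\m}{\overline{h}_{m,\infty}}<\frac{\g_{can}}{\g_\m}$ into the two pointwise inequalities $\overline{h}_\m>\overline{h}_{m,\infty}$ and $\overline{h}_\m\,\g_\m<\overline{h}_{m,\infty}\,\g_{can}$ on $]0,1[$, and then compare numerators and denominators separately. The second inequality gives $\int_0^1\overline{h}_\m\,\g_\m\,|\vf'|^2\,dx\leq\int_0^1\overline{h}_{m,\infty}\,\g_{can}\,|\vf'|^2\,dx$, the first gives $\int_0^1\overline{h}_\m\,\vf^2\,dx\geq\int_0^1\overline{h}_{m,\infty}\,\vf^2\,dx$, and since every weight is positive on $]0,1[$, enlarging the numerator while shrinking the denominator can only raise the quotient. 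The boundary asymptotics established earlier, $\g_\m=\g_{can}+O(\g_{can}^2)$ together with $\overline{h}_\m\sim c\,(1-x)^m$ from Corollary \ref{y7}, ensure that all four integrals converge, so the manipulation is legitimate.

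It then remains to run the monotonicity of the Min--Max. For any fixed $(j+1)$-dimensional $V\subset H_2^0$ the pointwise inequality yields $\sup_{\vf\in V\setminus\{0\}}R_{\m,h}(\vf)\leq\sup_{\vf\in V\setminus\{0\}}R_{\m_{can},h_{m,\infty}}(\vf)$, and taking the infimum over all such $V$ delivers $\la_j(\omega,h)\leq\la_j(\omega_{can},h_{m,\infty})$ for every $j\geq 1$. I expect the real difficulty to lie not in this last formal step but in securing the common variational framework behind it: one must know that the subspace form of Courant--Fischer genuinely applies to the singular operator $\Delta_{\m_{can},h_{m,\infty}}$ with precisely the quotient $R_{\m_{can},h_{m,\infty}}$, and that $H_2^0$ is truly metric-independent so that the two infima are taken over the same set of subspaces; both facts rest on the self-adjointness and explicit spectral description of \cite{Mounir1} recalled in Theorem \ref{y2}. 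Granting them, the argument closes. Moreover, since the hypothesis is strict, one even obtains $R_{\m,h}(\vf)<R_{\m_{can},h_{m,\infty}}(\vf)$ for non-constant $\vf$; testing against the span of the first $j+1$ eigenfunctions of $\Delta_{\m_{can},h_{m,\infty}}$, whose extremiser is non-constant, then sharpens the conclusion to a strict inequality, which is the form used in Theorem \ref{z1}.
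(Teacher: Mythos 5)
Your proposal is correct and follows essentially the same route as the paper: split the hypothesis into the two pointwise inequalities $\overline{h}_\m>\overline{h}_{m,\infty}$ and $\overline{h}_\m\g_\m<\overline{h}_{m,\infty}\g_{can}$, deduce the pointwise comparison $R_{\m,h}(\vf)\leq R_{\m_{can},h_{m,\infty}}(\vf)$ on the common space, and conclude by the monotonicity (min--max) principle together with Theorem \ref{y2}. The paper compresses all of this into two displayed inequalities and the phrase ``by the monotonicity principle''; you have merely made explicit the Courant--Fischer formulation and the metric-independence of $H_2^0$ that justify it.
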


 \begin{proof} Suppose that $1<\frac{\overline{h}}{\overline{h}_{m,\infty}}<\frac{\overline{g}_{can}}{\overline{g}_\omega}$. Then,

 \[
 R_{\omega,h}(\varphi)\leq \frac{\int_0^1 \overline{h}_{m,\infty}(x)\overline{g}_{can}(x)|\varphi'(x)|^2 dx}{\int_0^1
 \overline{h}_{m,\infty}(x)\varphi(x)^2dx},\quad
  \varphi\in K_{\omega,h}\setminus\{0\}.
 \]
 Notice that
 \[
 R_{\omega_{can},h_{m,\infty}}(\varphi)= \frac{\int_0^1\overline{h}_{m,\infty}(x) \overline{g}_{can}(x)|\varphi'(x)|^2 dx}{\int_0^1\overline{h}_{m,\infty}(x)\varphi(x)^2
 dx},\quad
 \varphi\in K_{\omega,h}\setminus\{0\}\,\,\,\text{(see the notations of 
 Example \ref{z3}, \eqref{metcan} and \eqref{metcan1})}.
 \]
By the monotonicity principle and Theorem \ref{y2}, we obtain
\[
\lambda_j(\omega,h)\leq \lambda_j(\omega_{can},h_{m,\infty}),\quad j=1,2,\ldots 
\]
 \end{proof}

In particular, when $m=0$ and $h=h_{0,\infty}$ is the constant metric on $\mathcal{O}$,  Theorem  \ref{y4} becomes
\[
\lambda_j(\omega,h)\leq \lambda_j(\omega_{can}, h_{0,\infty})=:\lambda(\omega_{can}),\quad j=1,2,\ldots 
\]
and since $\{\lambda_k(\omega_{can}, h_{0,\infty}),\, k\in \mathbf{N}_{\geq 1}\}=\{\frac{\alpha^2}{2}|J_0(\alpha)J'_0(\alpha)=0\}$, then we recover
 the result of \cite[theorem 2]{Abreu2}.

\subsection{Large first invariant eigenvalue}\label{www1}
In this paragraph, we show that Theorem \ref{y4} does not hold if
$\overline{h}_\omega$ and $\overline{g}_\omega$ do not satisfy the
condition of the theorem. More precisely, we construct 
$(\omega_\mu)_{\mu\geq 0}$ (resp. $(h_{\omega_\mu})_{\mu\geq 0}$) 
a sequence of smooth normalized volume forms (resp. smooth metrics) on $\mathbf{P}^1$
(resp. on $\mathcal{O}(m)$) such that 
\[
\lim_{\mu\rightarrow \infty}\sup_{x\in [0,1]}\frac{\overline{g}_{\omega_\mu}\overline{h}_{\omega_\mu}}{
\overline{g}_{can}\overline{h}_{\infty,m}}=\infty,\quad\text{and}\quad
\lim_{\mu\rightarrow \infty}\lambda_1(\omega_\mu,h_{\omega_{\mu}})=\infty.
\]

The following construction is a slight generalization of the construction
in \cite[Paragraph 3.1]{Abreu2}.
Let $\mu\in \mathbf{R}^+$, we set 
$\overline{g}_\mu(x):=2x(1-x)(1+4\mu x(1-x))$ for any $x\in [0,1]$. Then
$\overline{g}_\mu$ defines a   smooth normalized volume form 
on $\mathbf{P}^1$ denoted
$\omega_\mu$, see the construction of this article or 
\cite[Paragraph 3.1]{Abreu2}.  By Corollary 
\ref{y7}, there exists $h_{\omega_\mu}$,
 a smooth and invariant metric on $\mathcal{O}(m)$ such that 
 $\overline{h}_{\omega_\mu}(x):=h_{\omega_\mu}(1,1)(\exp(-G_{\omega_\mu}(x))=(1-x)^m$ for any
 $x\in [0,1]$. We check easily that $
 \frac{1}{2^m}(1+\mu)\leq \sup_{x\in [0,1]}\frac{\overline{g}_{\omega_\mu}\overline{h}_{\omega_\mu}}{
\overline{g}_{can}\overline{h}_{\infty,m}}$. Thus,
\begin{equation}
\lim_{\mu\rightarrow \infty}\sup_{x\in [0,1]}\frac{\overline{g}_{\omega_\mu}\overline{h}_{\omega_\mu}}{
\overline{g}_{can}\overline{h}_{\infty,m}}=\infty.
\end{equation}

 We have
\[
R_{\omega_\mu,h_{\omega_\mu}}(\varphi)=\frac{2\int_0^1 (1-x)^m
 x(1-x)(1+4\mu x(1-x))|\varphi'(x)|^2
dx}{\int_0^1(1-x)^m|\varphi(x)|^2dx},\quad
 \varphi\in K_{\omega_\mu,h_\mu}\setminus\{0\}.
\]
This gives,
\begin{equation}
R_{\omega_\mu,h_{\omega_\mu}}(\varphi)\geq 8\mu \frac{\int_0^1 (1-x)^m
 x^2(1-x)^2|\varphi'(x)|^2
dx}{\int_0^1(1-x)^m|\varphi(x)|^2dx},\quad
 \varphi\in K_{\omega_{\mu},h_{\omega_\mu}}\setminus\{0\}
\end{equation}

Then,
\begin{equation}
\lambda_1(\omega_\mu,h_{\omega_\mu} )\geq 8\mu\inf_{\varphi \in K_{\omega_\mu,
h_{{}_{\omega_\mu}},1}}  \frac{\int_0^1 (1-x)^m
 x^2(1-x)^2|\varphi'(x)|^2
dx}{\int_0^1(1-x)^m|\varphi(x)|^2dx}.
\end{equation}
  We claim that $\inf_{{}_{\varphi \in K_{{}_{\omega_\mu,
h_{{}_{\omega_\mu}},1}}}}  \frac{2\int_0^1 (1-x)^m
 x^2(1-x)^2|\varphi'(x)|^2
dx}{\int_0^1(1-x)^m|\varphi(x)|^2dx}\neq 0$, this  will imply that
$\lambda_1(\omega_\mu,h_{\omega_\mu} )\rightarrow \infty$ as 
$\mu\rightarrow \infty$. Let us prove that 

\begin{equation}\label{e1}
\inf_{{}_{\varphi \in K_{{}_{\omega_\mu,
h_{{}_{\omega_\mu}},1}}}}  \frac{2\int_0^1 (1-x)^m
 x^2(1-x)^2|\varphi'(x)|^2
dx}{\int_0^1(1-x)^m|\varphi(x)|^2dx}\neq 0.
\end{equation} 
 First, we prove the following 
 
  \begin{lemma}\label{t2}
Let $\psi\in \mathcal{C}^\infty([0,1])$ with $\psi(0)=0$. We have
\[
\int_0^1 (1-x)^m(1-x^2)^2|\psi'(x)|^2dx\geq (m+1)\int_0^1(1-x)^m |\psi(x)|^2dx. 
\] 
\end{lemma}

\begin{proof}
From 
\[
0\leq \int_0^1\Bigl(-\frac{\psi(x)}{x}+(1-x^2)\psi'(x)  \Bigr)^2(1-x)^mdx,
\]
and integrating by parts we obtain
\[
\int_0^1(1-x)^m(1-x^2)^2|\psi'(x)|^2dx\geq \int_0^1(1-x)^m|\psi(x)|^2
\bigl(1+m+\frac{m}{x} \bigr)dx\geq (m+1)\int_0^1(1-x)^m|\psi(x)|^2
dx.
\]
\end{proof}

\begin{corollary}
There exists a constant $M>0$, such that for any  $\varphi$  a smooth function in $ K_{\omega_\mu,h_\mu,1}$,  we have
\begin{equation}\label{t5}
2\int_0^1(1-x)^m x^2(1-x)^2|\varphi'(x)|^2dx \geq M \int_0^1(1-x)^m |
\varphi(x)|^2dx.
\end{equation}

\end{corollary}
\begin{proof} 

Let $\varphi$ be a smooth function in  $K_{\omega_\mu,h_\mu,1}$. We have
$\int_0^1\overline{h}_{\omega_\mu}(x)\varphi(x) dx=\int_0^1(1-x)^m \varphi(x)dx
=0 $. We set 
$\psi(x):=\varphi(\frac{x+1}{2})$ for any $x\in [-1,1]$. Clearly, 
$\int_{-1}^1(1-x)^m \psi(x)dx
=0 $.  We have
\[
\int_{-1}^0(1-x)^m(1-x^2)^2\bigl|
[\psi(x)-\psi(0)]'\bigr|^2dx\geq \int_{-1}^0(1-x^2)^2\bigl|
[\psi(x)-\psi(0)]'\bigr|^2dx\geq C \int_{-1}^0\bigl|
\psi(x)-\psi(0)\bigr|^2dx,
\]
where the last inequality follows from \cite[Lemma 3.1]{Abreu2}. Thus,

\begin{equation}\label{t1}
\int_{-1}^0(1-x)^m(1-x^2)^2\bigl|
[\psi(x)-\psi(0)]'\bigr|^2dx \geq \frac{C}{2^m} 
\int_{-1}^0(1-x)^m\bigl|
\psi(x)-\psi(0)\bigr|^2dx,
\end{equation}
On other hand, we have by Lemma \ref{t2} 
\begin{equation}\label{t3}
\int_0^1(1-x)^m(1-x^2)^2|[\psi(x)-\psi(0)]'|^2dx\geq (m+1) 
\int_0^1(1-x)^m\bigl|
\psi(x)-\psi(0)\bigr|^2dx
\end{equation}
We let $M':=\min(\frac{C}{2^m},m+1)$. From \eqref{t1} and \eqref{t3}, we obtain
\[
\begin{split}
\int_{-1}^1(1-x)^m(1-x^2)^2|\psi'(x)|^2dx &\geq M'
\int_{-1}^1(1-x)^m \bigl|
\psi(x)-\psi(0)\bigr|^2dx\\
= &  M'\int_{-1}^1(1-x)^m |\psi(x)|^2dx
-M' \psi(0)\int_{-1}^1(1-x)^m \psi(x)dx\\
&+
M'\psi(0)^2\int_{-1}^1 (1-x)^mdx\\
\geq & M'\int_{-1}^1(1-x)^m |\psi(x)|^2dx.
\end{split}
\]
Since 
$\psi(x)=\varphi(\frac{x+1}{2})$ for any $x\in [-1,1]$, 
we deduce \eqref{t5} and then \eqref{e1}.

\end{proof}

%
%

\bibliographystyle{plain}
\bibliography{biblio}
\end{document}